
\documentclass[11pt]{amsart}

\usepackage{amssymb,amsmath,amscd,graphicx,
latexsym,amsthm}
\usepackage{amssymb,latexsym,amsmath,amscd,graphicx}
\usepackage[mathscr]{eucal}
  \usepackage[all]{xy}
\usepackage[a4paper,top=3.5cm,bottom=3.5cm,left=3cm,right=3cm]{geometry}

\def\Z{{\mathbb Z}}
\def\N{{\mathbb N}} 
\def\Q{{\mathbb Q}}
\def\C{{\mathbb C}}                  
\def\P{{\mathbb P}}
\def\K{{\mathbb K}}

\def\OO{{\mathcal O}}

\def\FF{\mathcal{F}}

\def\II{\mathcal{I}}
\def\JJ{\mathcal{J}}
\def\PP{\mathcal{P}}

\def\JJ{\mathcal J}
\def\Ad{\widehat{A}}

\def\l{{\underline{\textit{l}}}}
\def\n{{\underline{\textit{n}}}}

\def\HHom{\mathcal{H}om}

\def\eps{{\epsilon}}

\def\dual{{\vee}}
\def\tensor{{\otimes}}

\def\Codim{\mathrm{codim}}
\def\Sym{\mathrm{Sym}}
\def\NS{\mathrm{NS}}
\def\Pic0{\mathrm{Pic}^0}

\def\Inf{\mathrm{Inf}}
\def\Sup{\mathrm{Sup}}
\def\max{\mathrm{max}}
\def\min{\mathrm{min}}
\def\deg{\mathrm{deg}}
\def\Char{\mathrm{char}}
\def\Supp{\mathrm{Supp}}
\def\NS{\mathrm{NS}}
\def\Tor{\mathrm{Tor}}
\def\reg{\mathrm{reg}}

\def\Ker{\mathrm{Ker}}

\def\la{{\langle}}
\def\ra{{\rangle}}

\theoremstyle{plain}

\newtheorem{theorem}{Theorem}[section]

\newtheorem{proposition/example}[theorem]{Proposition/Example}
\newtheorem{definition/theorem}[theorem]{Definition/Theorem}
\newtheorem{proposition}[theorem]{Proposition}

\newtheorem{corollary}[theorem]{Corollary}
\newtheorem{lemma}[theorem]{Lemma}

\theoremstyle{definition}
\newtheorem{definition}[theorem]{Definition}

\newtheorem{remark}[theorem]{Remark}

\newtheorem{conjecture/question}[theorem]{Conjecture/Question}

\newtheorem{remark/definition}[theorem]{Remark/Definition}
\newtheorem{notation/assumptions}[theorem]{Assumptions/Notation}

\numberwithin{equation}{section}

\pagestyle{myheadings} \theoremstyle{remark}

\begin{document}  

\title{the basepoint-freeness threshold and syzygies of abelian varieties}

\author{Federico Caucci}
  \address{Sapienza Universit\`{a} di Roma, P.le Aldo Moro 5, I-00185 Roma, Italy}
 \email{{\tt caucci@mat.uniroma1.it}}
 \thanks{}

\maketitle

\setlength{\parskip}{.1 in}

\begin{abstract} 
We show how a natural constant introduced by Jiang and Pareschi for a polarized abelian variety encodes information about the syzygies of the section ring of the polarization. As a particular case this gives a quick and characteristic-free proof of Lazarsfeld's conjecture on syzygies of abelian varieties, originally proved by Pareschi in characteristic zero. 
 \end{abstract}

\section{Introduction}

Throughout this paper we will work with abelian varieties over an algebraically closed field $\K$. 
In \cite{jipa}, Jiang and Pareschi introduced and studied the (generic) cohomological ranks $h^i(A, \FF\la x\l \ra)$ of a (bounded complex of) $\Q$-twisted coherent sheaf on a polarized abelian variety $(A, \l)$. This defines \emph{cohomological rank functions} of $\FF$ with respect to the polarization $\l$
\[
h^i_{\FF, \l} : \Q \rightarrow \Q_{\geq 0},\footnote{In \emph{op.cit.} such functions are extended to (continuous) real functions, but in this paper we don't need this.}
\]   
as follows 
\[
h^i_{\FF, \l}(x):=h^i(A, \FF\la x\l \ra).
\]
In \emph{op.cit.} it is observed that these functions are already very interesting in the case $\FF = \II_p$, where $\II_p$ is the ideal sheaf of a closed point $p \in A$. Indeed the \emph{basepoint-freeness threshold}
\[
\eps_1(\l) := \Inf \{x \in \Q \ |\ h^1_{\II_p, \l}(x) = 0 \},\footnote{In \emph{op.cit.}\ this is denoted by $\beta(\l)$.}
\] 
has the following properties: 

\noindent (a) \emph{ $\eps_1(\l) \leq 1$ and   $\eps_1(\l) < 1$  if and only if the polarization $\l$ is basepoint-free}, i.e.\ any line bundle $L$ representing $\l$ has no base points.  \\
 (b)  
\emph{ $\eps_1(\l) < \frac{1}{2}$  
if and only if $\l$ is projectively normal, } meaning that $L$ is projectively normal for all line bundles $L$ representing the class $\l$ (\cite{jipa} Corollary E).

\noindent
 In this paper we go further on item (b), proving that $\eps_1(\l)$ indeed encodes information about the syzygies of the section algebra of $L$. In recent years syzygies of abelian varieties  has received considerable attention. On the one hand Pareschi (\cite{pa1}, see also \cite{papoII}), building partially on previous works of Kempf (\cite{ke}, \cite{ke2}), proved, in characteristic zero, Lazarsfeld's conjecture on syzygies of abelian varieties endowed with a polarization which is a multiple of a given one. This was is in turn a generalization of classical results of Koizumi and Mumford. On the other hand, more recently K\"uronya, Ito and Lozovanu (\cite{kulo}, \cite{ito}, \cite{lo}), building on previous work of Hwang-To (\cite{ht}) and Lazarsfeld-Pareschi-Popa (\cite{lapapo}), used completely different methods -- involving local positivity and Nadel vanishing theorem -- to prove (over $\mathbb C$) effective statements for the syzygies of  abelian varieties of dimension 2 and 3 endowed with \emph{any} polarization, in particular with a primitive polarization.  
 
 In this paper we show a general result, Theorem \ref{main1} below, partially generalizing (b) to higher syzygies. This provides at the same time a surprisingly quick proof of Lazarsfeld's conjecture, extending it to abelian varieties defined over a ground field of arbitrary characteristic, and a proof of the criterion of \cite{lapapo} relating local positivity and syzygies.

Turning to details, we first recall some terminology about syzygies of projective varieties.  Let $X$ be a projective variety and let $L$ be an ample line bundle on $X$. For an integer $p \geq 0$, the line bundle $L$ is said to \emph{satisfy the property} $(N_p)$ if the first $p$ steps of the minimal graded free resolution of the section algebra $R_L = \bigoplus_m H^0(X, L^m)$ over the polynomial ring $S_L = \Sym\ H^0(X, L)$ are linear (we refer to \S4 for the precise definition). Thus $(N_0)$ means that $R_L$ is generated in degree $0$ as an $S_L$-module, i.e.\ that $L$ is projectively normal (\emph{normally generated} in Mumford's terminology \cite{mum}); $(N_1)$ means that in addition the homogeneous ideal $I_{X/\P}$ of $X$ in $\P = \P(H^0(X, L)^{\dual})$ is generated by quadrics (\emph{normally presented} in \cite{mum}); $(N_2)$ means that the relations among these quadrics are generated by linear ones (this is the first non-classical condition) and so on. These notions were introduced by Green (\cite{green}) and the present terminology was introduced in \cite{grla}. 
Our main result is the following 
\begin{theorem}\label{main1}
Let $(A, \l)$ be a polarized abelian variety defined over an algebraically closed field $\K$ and let $p$ be a non-negative integer. If
\[
\eps_1(\l) < \frac{1}{p+2},
\]
then the property $(N_p)$ holds for $\l$, i.e.\ it holds for any line bundle $L$ representing $\l$. 
\end{theorem}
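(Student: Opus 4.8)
The plan is to run the classical kernel-bundle criterion for $(N_p)$ and then to feed the resulting vanishing into the basepoint-freeness threshold, treating the theorem as the natural extension of item (b), which is exactly the case $p=0$. Fix a line bundle $L$ representing $\l$ and let $M_L$ be its kernel bundle, $0 \to M_L \to H^0(A,L)\otimes\OO_A \to L \to 0$. Since $A$ is abelian and $L$ ample, Mumford's vanishing gives $H^i(A, L^m\otimes\alpha)=0$ for all $i>0$, $m\ge 1$ and $\alpha\in\Pic0(A)$, so the Green--Lazarsfeld description of Koszul cohomology applies and $(N_p)$ for $L$ is implied by the vanishing $H^1(A, \wedge^{i+1}M_L\otimes L^m\otimes\alpha)=0$ for all $0\le i\le p$, $m\ge 1$ and $\alpha\in\Pic0(A)$. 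The virtue of this formulation is that the range is controlled by a single hypothesis: the index-$i$ vanishing will be delivered by $\eps_1(\l)<\tfrac1{i+2}$, the higher powers $m\ge 2$ only improve positivity, and since $\eps_1(\l)<\tfrac{1}{p+2}\le\tfrac1{i+2}$ for every $i\le p$, the assumption covers the whole family at once, the hardest case being $(i,m)=(p,1)$. Because every line bundle representing $\l$ is of the form $L\otimes\alpha$, proving this vanishing for all $\alpha$ is precisely what is needed to conclude $(N_p)$ for $\l$, and it is also what makes the generic-vanishing machinery available.

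Next I would reduce the remaining vanishing to the behaviour of the cohomological rank functions. The Koszul complex of the defining surjection, $0\to\wedge^{p+1}M_L\to\wedge^{p+1}H^0(L)\otimes\OO_A\to\wedge^{p}H^0(L)\otimes L\to\cdots$, is exact in every characteristic; twisting it by $L\otimes\alpha$ and chasing cohomology turns $H^1(A,\wedge^{p+1}M_L\otimes L\otimes\alpha)=0$ into the surjectivity of a cascade of multiplication maps of shape $\wedge^{j+1}H^0(L)\otimes H^0(L^{s}\otimes\alpha)\to\wedge^{j}H^0(L)\otimes H^0(L^{s+1}\otimes\alpha)$. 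Each such surjectivity is, after applying the Fourier--Mukai transform and the $M$-regularity/continuous-global-generation dictionary of \cite{jipa}, governed by the vanishing of $h^1_{\FF,\l}$ for an explicit $\Q$-twisted sheaf $\FF$ built from $L$ and a kernel factor, exactly as in the $p=0$ argument behind item (b). The mechanism is a multiplicativity: one peels off the factors one at a time along the resolution, and each step is admissible as soon as the accumulated twisting parameter exceeds $\eps_1(\l)$. Accounting for the $p+1$ kernel-bundle factors together with the single remaining ample factor $L$ gives $p+2$ slots, and the arithmetic of these $p+2$ thresholds closes up precisely under $\eps_1(\l)<\tfrac{1}{p+2}$; for $p=0$ this is $\eps_1(\l)<\tfrac12$, recovering (b).

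The hard part will be the middle step: converting the abstract threshold $\eps_1(\l)$, defined purely through $\II_p$, into the concrete family-wise vanishing for $\wedge^{p+1}M_L\otimes L$ that higher syzygies demand. This requires (i) the precise Fourier--Mukai translation identifying each multiplication surjectivity with the vanishing of $h^1_{\FF,\l}$ of an explicit $\Q$-twist, together with the multiplicativity that lets the $p+1$ factors be handled independently; and (ii) the numerical bookkeeping that makes the $p+2$ accumulated thresholds close up exactly at $\tfrac{1}{p+2}$ rather than at a weaker bound. By contrast, the reduction of $(N_p)$ to the kernel-bundle vanishing and the elimination of the higher powers $L^m$ are routine and I would dispatch them quickly. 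I would also stress that the characteristic-free nature of the statement is a free consequence of this strategy: every ingredient used -- Mumford vanishing on $A$, exactness of the Koszul complex, surjectivity of multiplication maps, and the definition of $\eps_1(\l)$ itself -- is available in arbitrary characteristic, so no Kodaira-type vanishing enters, which is exactly what obstructed the earlier characteristic-zero argument of Pareschi.
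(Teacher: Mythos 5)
There is a genuine gap, and it is exactly the characteristic issue your proposal claims to have dispatched for free. Your reduction of $(N_p)$ to the vanishing $H^1(A,\Lambda^{i+1}M_L\tensor L^h\tensor\alpha)=0$ is indeed valid in any characteristic, but the machinery you then invoke to establish that vanishing --- ``peeling off'' the $p+1$ kernel-bundle factors one at a time through the Fourier--Mukai/generic-vanishing dictionary --- is inherently a \emph{tensor-product} mechanism: preservation of vanishing (Proposition \ref{heartprop}) says that an $IT(0)$ $\Q$-twist tensored with a $GV$ $\Q$-twist is $IT(0)$, and iterating it controls $M_L^{\tensor(i+1)}\tensor L^h$, not $\Lambda^{i+1}M_L\tensor L^h$. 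An exterior power has no ``factors'' to peel. In characteristic zero one bridges the two objects because $\Lambda^{i+1}M_L$ is a direct summand of $M_L^{\tensor(i+1)}$; in characteristic $p>0$ that splitting fails, and your argument then gives no information whatsoever about $\Lambda^{i+1}M_L$. So the claim that your strategy is characteristic-free is wrong: as written it is essentially the classical characteristic-zero argument, and your diagnosis of the historical obstruction is also off --- on an abelian variety no Kodaira-type vanishing is ever needed (ample line bundles have no higher cohomology in any characteristic); what confined Pareschi's proof to characteristic zero is precisely the direct-summand splitting you are implicitly using.

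The paper circumvents this by never touching exterior powers. Section 4 proves, following Kempf, a criterion stated purely in terms of tensor powers: if $H^1(A,M_L^{\tensor(i+1)}\tensor L^h)=0$ for all $h\geq1$ and $0\leq i\leq p$, then $(N_p)$ holds in arbitrary characteristic. This goes through the groups $K(L^h,L,\ldots,L)$, the modules $T^i(R_L)$ with generation degrees $d^i(R_L)=i+1$ (Lemma \ref{lemmalink}), and Kempf's $\Tor$-vanishing Lemma \ref{lemmak}, replacing the Koszul-cohomology criterion entirely. The tensor-power vanishing itself is Proposition \ref{mainprop}, and its proof also pins down the step you left vague: the hypothesis $\eps_1(\l)<\frac{1}{p+2}$ converts, via Jiang--Pareschi's Theorem \ref{thmDjipa}, into $\kappa_1(\l)=\frac{\eps_1(\l)}{1-\eps_1(\l)}<\frac{1}{p+1}$, which by Lemma \ref{lemmajipa} says exactly that $M_L\la \frac{1}{p+1}\l\ra$ is $IT(0)$; then preservation of vanishing applied to $\bigl(M_L\la \frac{1}{p+1}\l\ra\bigr)^{\tensor(p+1)}\tensor\OO_A\la(h-1)\l\ra$ gives that $M_L^{\tensor(p+1)}\tensor L^h$ is $IT(0)$. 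Your ``$p+2$ slots'' numerology is consistent with this, but the actual mechanism is the single threshold conversion $\eps_1<\frac{1}{p+2}\Leftrightarrow\kappa_1<\frac{1}{p+1}$ followed by $(p+1)$-fold tensoring, and you never supply it. To repair your proposal you must either restrict to characteristic zero and invoke the direct-summand trick explicitly, or replace your Koszul-cohomology criterion by a tensor-power criterion such as Kempf's.
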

\begin{corollary}\label{pathm}
Let $m \in \N$. If 
\[
\eps_1(\l) < \frac{m}{p+2},
\] 
then the polarization $m\l$ satisfies the property $(N_p)$.
\end{corollary}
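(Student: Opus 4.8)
The plan is to deduce this from Theorem \ref{main1} by applying that theorem not to $\l$ but to the polarization $m\l$, once we understand how the basepoint-freeness threshold transforms under passing to a multiple of the polarization. Concretely, I would first establish the scaling relation
\[
\eps_1(m\l) = \frac{1}{m}\,\eps_1(\l).
\]
Everything else is then a one-line rescaling of the hypothesis.

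To prove the scaling relation, recall from \cite{jipa} that the cohomological rank function $h^i_{\FF, \l}$ records the (generic) cohomology of the $\Q$-twist $\FF\la x\l\ra$, and that this twist depends only on the numerical class $x\l \in \NS(A)_\Q$. Since $x\,(m\l) = (mx)\,\l$ as $\Q$-twists, and since the ideal sheaf $\II_p$ itself does not depend on the chosen polarization, one obtains the identity of functions $h^1_{\II_p, m\l}(x) = h^1_{\II_p, \l}(mx)$ for every $x \in \Q$. Taking the infimum over the vanishing locus and substituting $y = mx$ (legitimate since $m>0$) gives
\[
\eps_1(m\l) = \Inf\{x \in \Q \mid h^1_{\II_p, \l}(mx) = 0\} = \frac{1}{m}\,\Inf\{y \in \Q \mid h^1_{\II_p, \l}(y) = 0\} = \frac{1}{m}\,\eps_1(\l).
\]

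With the scaling relation in hand the corollary is immediate: the hypothesis $\eps_1(\l) < \frac{m}{p+2}$ is, after dividing by $m$, equivalent to $\eps_1(m\l) = \frac{1}{m}\eps_1(\l) < \frac{1}{p+2}$. Applying Theorem \ref{main1} to the polarized abelian variety $(A, m\l)$ then yields that the property $(N_p)$ holds for $m\l$, which is the assertion.

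The only genuinely delicate point is the justification of the functional identity $h^1_{\II_p, m\l}(x) = h^1_{\II_p, \l}(mx)$. Here one must check that the normalization in the definition of the cohomological rank functions of \cite{jipa} — which expresses $h^i_{\FF,\l}(a/b)$ via the pullback $\mu_b^*\FF$ under multiplication-by-$b$ maps, twisted by a line bundle representing $ab\,\l$ and divided by $b^{2g}$ (with $g=\dim A$) — is genuinely compatible with rescaling the polarization, so that replacing $\l$ by $m\l$ amounts precisely to the linear reparametrization $x \mapsto mx$ of the variable. Once this compatibility is verified against the precise definition in \emph{op.\ cit.}, no further argument is required.
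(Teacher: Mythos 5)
Your proposal is correct and follows essentially the same route as the paper: the paper's proof likewise establishes the identity $h^1_{\II_p, m\l}(x) = h^1_{\II_p, \l}(mx)$ directly from the definition of the cohomological rank functions, deduces the scaling relation $\eps_1(m\l) = \frac{\eps_1(\l)}{m}$, and applies Theorem \ref{main1} to the polarization $m\l$. Your extra verification of the functional identity (that twisting by $x\,(m\l)$ coincides with twisting by $(mx)\,\l$, since a representative of $m\l$ is $L^m$ and $(L^m)^{ab}=L^{(ma)b}$) is exactly the "by definition" step the paper invokes.
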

\begin{proof}
By definition (see \S\ 2) we have $h^1_{\II_p, m\l}(x) = h^1_{\II_p, \l}(mx)$, therefore $\eps_1(m\l) = \frac{\eps_1(\l)}{m}$. Now Theorem \ref{main1} applies to $m\l$, because $\eps_1(m\l) < \frac{1}{p+2}$. 
\end{proof}
\noindent A classical result of Koizumi (\cite{ko}) states that if $L$ is an ample line bundle on a complex abelian variety and $m \geq 3$, then $L^m$ is projectively normal (see \cite{se1}, \cite{sa} and \cite{se2} for a proof of the analogue result in positive characteristic, based on Mumford's ideas). Moreover, a well-known theorem of Mumford and Kempf says that, when $m \geq 4$, the homogeneous ideal of $A$ in the embedding given by $L^m$ is generated by quadrics (\cite{mum}, \cite{ke2} Thm 6.13), i.e.\ $L^m$ is normally presented. Based on these classical facts and motivated by a result of Green on higher syzygies for curves (\cite{green}), Lazarsfeld conjectured that, for an ample line bundle $L$ on an abelian variety, $L^m$ satisfies the property $(N_p)$ if $m \geq p+3$ (\cite{lasampl} Conjecture 1.5.1). This was proved by Pareschi (\cite{pa1}) in characteristic zero. Pareschi and Popa also proved a stronger version of it in \cite{papoII}. 

We have that Corollary \ref{pathm} gives a very quick -- and characteristic-free -- proof of Lazarsfeld's conjecture. Indeed, by (a) above,
\[
\eps_1(\l) \leq 1 < \frac{p+3}{p+2}. 
\]
Moreover, it also implies that the polarization $m\l$ satisfies the property $(N_p)$, as soon as $m \geq p + 2$ and $\l$ is basepoint-free (see \cite{papoII} for a more precise result). Indeed, if $\l$ is basepoint-free, then
\[
\eps_1(\l) < 1 = \frac{p+2}{p+2},
\]
thanks again to (a) above.

More in general, defining
\[
t(\l) := \max \{t \in \N \ |\ \eps_1(\l) \leq \frac{1}{t} \}, 
\]   
we have 
\begin{theorem}\label{syzmult}
Let $p$ and $t$ be non-negative integers with $p + 1 \geq t$. Let $\l$ be a basepoint-free polarization on $A$ such that $t(\l) \geq t$. Then the property $(N_p)$ holds for $m \l$, as soon as $m \geq p +3-t$. 
\end{theorem}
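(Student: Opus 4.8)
The plan is to reduce everything to Corollary \ref{pathm}, which already guarantees that $(N_p)$ holds for $m\l$ as soon as $\eps_1(\l) < \frac{m}{p+2}$. Thus the entire content of the theorem is the purely numerical statement that the hypotheses $t(\l) \geq t$, $p+1 \geq t$, $m \geq p+3-t$, together with the basepoint-freeness of $\l$, force this one strict inequality. First I would unwind the definition of $t(\l)$: for $t \geq 1$ the assumption $t(\l) \geq t$ means exactly $\eps_1(\l) \leq \frac{1}{t}$, while property (a) supplies the universal bound $\eps_1(\l) \leq 1$, strict precisely when $\l$ is basepoint-free.

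Next I would establish the elementary inequality $\frac{1}{t} \leq \frac{p+3-t}{p+2}$, which after clearing denominators is equivalent to $(t-1)(p-t+2) \geq 0$. Here the second factor satisfies $p-t+2 \geq 1$ because $p+1 \geq t$, so the product is non-negative, and in fact strictly positive once $t \geq 2$. Hence for $t \geq 2$ I would chain
\[
\eps_1(\l) \leq \tfrac{1}{t} < \tfrac{p+3-t}{p+2} \leq \tfrac{m}{p+2},
\]
where the last step uses $m \geq p+3-t$; this already yields the strict inequality demanded by Corollary \ref{pathm}.

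The main subtlety — and the reason the basepoint-freeness hypothesis is genuinely needed — is the boundary range $t \in \{0,1\}$, where the numerical inequality becomes an equality (at $t=1$, $m=p+2$ one has $\frac{m}{p+2}=1=\frac{1}{t}$) and so provides only $\eps_1(\l) \leq \frac{m}{p+2}$, not the strict bound. In this case I would argue differently: $m \geq p+3-t \geq p+2$ forces $\frac{m}{p+2} \geq 1$, and basepoint-freeness gives $\eps_1(\l) < 1 \leq \frac{m}{p+2}$ via property (a). Combining the two cases, in every situation $\eps_1(\l) < \frac{m}{p+2}$, so Corollary \ref{pathm} (equivalently Theorem \ref{main1} applied to $m\l$, using $\eps_1(m\l)=\eps_1(\l)/m$) finishes the proof. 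I expect the only real care to be this boundary case analysis, whose role is exactly to upgrade the non-strict numerical inequality to a strict one by invoking the positivity of $\l$.
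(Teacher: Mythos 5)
Your proof is correct and takes essentially the same route as the paper's: the same case split (the boundary cases $t\in\{0,1\}$ handled via property (a) and basepoint-freeness, and $t\geq 2$ handled via the numerical inequality $t(p+3-t)>p+2$, which is just the paper's quadratic $t^2-(p+3)t+(p+2)<0$ in factored form $(t-1)(p+2-t)>0$), followed by the same application of Corollary \ref{pathm}, equivalently Theorem \ref{main1} applied to $m\l$. There are no gaps.
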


However, one of the main feature of Theorem $\ref{main1}$ is the chance to be applied to \emph{primitive} polarizations, i.e.\ those that cannot be written as a multiple of another one. This is one of the reasons why it would be quite interesting the compute, or at least bound from above, the invariant $\eps_1(\l)$ of polarized abelian varieties $(A,\l)$. In this perspective, as already mentioned, an interesting issue arises in connection with a criterion of Lazarsfeld-Pareschi-Popa (\cite{lapapo}), where they prove that: \\
\emph{ if there exists an effective $\Q$-divisor $F$ such that its multiplier ideal}  $\JJ(A, F)$ \emph{is the ideal sheaf of the identity point of the abelian variety $A$ and} $\frac{1}{p+2}\l-F$ \emph{is ample, then $\l$ satisfies the property $N_p$} (see \cite{kulo},\cite{ito},\cite{lo}). \\
Therefore one is lead to  consider the threshold
\[
r(\l) := \Inf \{r \in \Q \ |\ \exists\ \textrm{an effective $\Q$-divisor $F$ on $A$ s.t.}\ r\l - F\ \textrm{is ample and 
 }\ \JJ(A, F) = \II_0\}.\footnote{Note that this set is non empty, i.e.\ $r(\l) < + \infty$. Proof: let $k$ be a sufficiently large positive integer such that the Seshadri constant of $M = L^k$ is strictly bigger than $2\dim A$. Such a $k$ exists because of the homogeneity of the Seshadri constant. Then, by Lemma 1.2 of \cite{lapapo}, there exists an effective $\Q$-divisor $F$ on $A$ such that $\JJ(A, F) = \II_0$ and $F \equiv_{\textrm{num}} \frac{1-c}{2}M$, for some $0 < c \ll 1$. If we now take $r > \frac{1-c}{2}k$, we have that $r\l - F$ is ample.}
\] 
The relation  with the basepoint-freeness threshold is in the following Proposition, based on Nadel's vanishing.
\begin{proposition}\label{nadel} Assume $\K=\C$. Then $\eps_1(\l)\le r(\l)$.
\end{proposition}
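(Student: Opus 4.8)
The plan is to prove $\eps_1(\l)\le r$ for \emph{every} rational $r$ admitting an effective $\Q$-divisor $F$ with $r\l-F$ ample and $\JJ(A,F)=\II_0$; taking the infimum over such $r$ then yields $\eps_1(\l)\le r(\l)$ at once, since $\eps_1(\l)$ is thereby a lower bound for the defining set of $r(\l)$. Because $h^1_{\II_p,\l}$ is independent of the point $p$ by translation invariance, I may assume $p=0$, so that the ideal sheaf $\II_0$ appearing in $\eps_1(\l)$ is exactly the multiplier ideal $\JJ(A,F)$. Fixing such a pair $(r,F)$, it then suffices to show that $h^1_{\II_0,\l}(x)=0$ for every rational $x>r$, as this forces $\eps_1(\l)\le r$ by definition of the infimum.

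So fix a rational $x=a/b>r$ with $b>0$, and recall that, by definition of the cohomological rank functions,
\[
h^1_{\II_0,\l}(x)=\frac{1}{b^{2\dim A}}\,h^1_{\mathrm{gen}}\!\big(A,\ \mu_b^*\II_0\otimes L^{ab}\big),
\]
where $\mu_b\colon A\to A$ is multiplication by $b$ and $L$ represents $\l$. The first key step is to identify the two sheaves entering a Nadel-type vanishing. Since we work over $\C$, the isogeny $\mu_b$ is étale, so $\mu_b^*\II_0=\II_{A[b]}$ is the (reduced) ideal sheaf of the $b$-torsion subscheme, and compatibility of multiplier ideals with étale pullback gives
\[
\JJ(A,\mu_b^*F)=\mu_b^*\JJ(A,F)=\mu_b^*\II_0=\II_{A[b]}.
\]

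The second step is the numerical bookkeeping needed to apply Nadel vanishing. Because $(-1)^*$ acts trivially on $\NS(A)$, the theorem of the cube gives $\mu_b^*\gamma\equiv b^2\gamma$ in $\NS(A)_\Q$ for every class $\gamma$; hence, writing $[F]$ for the numerical class of $F$, one has $L^{ab}\otimes\alpha-\mu_b^*F\equiv b^2\big(x\l-[F]\big)$ for any $\alpha\in\Pic0(A)$. Now $x\l-[F]=(x-r)\l+(r\l-[F])$ is a sum of an ample class (as $x>r$) and the ample class $r\l-[F]$, hence is itself ample, so $L^{ab}\otimes\alpha-\mu_b^*F$ is ample. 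Applying Nadel vanishing on $A$, where $K_A=0$, therefore yields
\[
H^i\!\big(A,\ L^{ab}\otimes\alpha\otimes\JJ(A,\mu_b^*F)\big)=H^i\!\big(A,\ \II_{A[b]}\otimes L^{ab}\otimes\alpha\big)=0\qquad(i>0)
\]
for every $\alpha\in\Pic0(A)$. In particular the generic value of $h^1$ vanishes, so $h^1_{\II_0,\l}(x)=0$, as required.

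The numerical computation is routine; the conceptual heart—and the step I expect to require the most care—is correctly matching the cohomological rank function, after pulling back along $\mu_b$, to a genuine Nadel vanishing statement. One must be sure that the étale pullback identity $\JJ(A,\mu_b^*F)=\II_{A[b]}$ is valid (this is where $\Char\K=0$ is used) and that the \emph{strict} inequality $x>r$, rather than $x\ge r$, is exactly what makes the twisting bundle ample so that Nadel applies. Granting these points, the passage from integer to arbitrary rational $x$ is handled uniformly by the isogeny $\mu_b$, and no further positivity input is needed.
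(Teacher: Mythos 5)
Your proposal is correct and takes essentially the same route as the paper: rewrite $h^1_{\II_0,\l}$ via pullback along $\mu_b$, use the compatibility of multiplier ideals with \'etale morphisms to identify $\mu_b^*\II_0 = \mu_b^*\JJ(A,F)$ with $\JJ(A,\mu_b^*F)$, note that $L^{ab}-\mu_b^*F \equiv_{\textrm{num}} b^2(x\l - F)$ is ample, and conclude by Nadel vanishing (using $K_A=0$). The only, harmless, difference is that you prove the vanishing for rational $x>r$, whereas the paper applies Nadel directly at $x=r$ itself, where $r\l-F$ is already ample by hypothesis; so the strict inequality you single out as essential is in fact not needed.
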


This, combined with Theorem \ref{main1}, provides a different and simpler  proof of the criterion of \cite{lapapo}.

 Finally, we note that in the papers \cite{kulo}, \cite{ito} for dimension $2$ and \cite{lo} for dimension $3$,  the authors, in the spirit of Green's and Green-Lazarsfeld's conjectures on curves,  show explicit geometric conditions 
ensuring the property  $(N_p)$ by means  of upper bounds on  the threshold $r(\l)$ (or related invariants) and applying the criterion of \cite{lapapo}.  This suggests to look for similar estimates directly for the basepoint-freeness threshold $\eps_1(\l)$. Namely, one could ask if $\eps_1(\l)$ is less than or equal to 
\[
\Inf \{ r \in \Q^+ \ |\ (D_r^{\dim Z} \cdot Z) > (\dim Z)^{\dim Z}\ \textrm{for any abelian subvariety}\ \{0\} \neq Z \subseteq A \},
\] 
where $D_r := rL$ (see in particular \cite{ito}, Question 4.2). This is true for complex abelian surfaces, thanks to the Proposition \ref{nadel} and \cite{ito}.

The paper is organized as follows: 
in \S2 we recall the definition and some basic properties of cohomological rank functions, and show that, despite the fact that in \cite{jipa} the authors assume that the characteristic of the ground field is zero, the basic theory of cohomological rank functions works over an algebraically closed ground field of arbitrary characteristic as well. 
Finally, in this section we prove Proposition \ref{nadel}.

In \S3 we prove the basic properties of the threshold $\eps_1(\l)$ needed in the proof of the main results.

In \S4 we show a criterion, due to Kempf (\cite{ke}), reducing the property $(N_p)$ of syzygies to the surjectivity of certain multiplication maps of global sections, inductively defined. This is easily proved and well-known in characteristic zero (see e.g. \cite{el}, proof of Cor. 2.2, or \cite{pa1}, Lemma 4.1(a)). Kempf's approach is more complicated, but  has the advantage to work in arbitrary characteristic. Since Kempf's argument is somewhat obscure, we provide full details. We hope that this will be useful for extending to arbitrary characteristic some of known results concerning syzygies of projective varieties in characteristic zero. 

In \S5 we prove the Theorems \ref{main1} and \ref{syzmult}.

\vskip0.3truecm\noindent\textbf{Acknowledgments.} 
This work is part of my PhD thesis. I would like to thank my advisor, Giuseppe Pareschi, for his invaluable guidance.

\vskip0.3truecm\noindent\textbf{Notation.}
 Let $A$ be an abelian variety over an algebraically closed field, and let $\dim A = g$. For $b \in \Z$, 
\[
\mu_b : A \rightarrow A, \quad x \mapsto bx
\]
denotes the multiplication-by-$b$ isogeny of degree $b^{2g}$. A polarization $\l$ on $A$ is the class of an ample line bundle $L$ in $\NS(A)=\mathrm{Pic} A / \Pic0 A$. For a polarization $\l$ on $A$, the corresponding isogeny is denoted 
\[
\varphi_{\l} : A \rightarrow \widehat{A},
\]
where $\widehat{A} = \Pic0 A$ is the dual abelian variety. Recall that $\deg(\varphi_{\l}) = \chi(\l)^2 = (h^0(\l))^2$. We denote by $\PP$ the normalized Poincar\'e line bundle on $A \times \Ad$, and by $R\Phi_{\PP} : \mathrm{D}^b(A) \rightarrow \mathrm{D}^b(\Ad)$ the Fourier-Mukai-Poincar\'e equivalence (\cite{mukai}). Here $\mathrm{D}^b(A)$ denotes the bounded derived category of coherent sheaves on $A$. For $\alpha \in \Ad$, the corresponding line bundle on $A$ is denoted by $P_{\alpha} = \PP|_{A \times \{\alpha\}}$. Given a complex $\FF \in \mathrm{D}^b(A)$, we denote by $\FF^{\dual} = R\HHom(\FF, \OO_A)$ its derived dual, and by $h^i_{gen}(A, \FF)$ the dimension of the hypercohomology $H^i(A, \FF \tensor P_{\alpha})$, for $\alpha$ general in $\Ad$.

\section{Cohomological rank functions on abelian varieties}
Given $\FF \in \mathrm{D}^b(A)$, $i \in \Z$ and a polarization $\l$ on $A$, Jiang and Pareschi considered in \cite{jipa} cohomological rank functions
\[
h^i_{\FF,\, \l} : \Q \rightarrow \Q_{\geq 0}
\]
defined as follows
\[
h^i_{\FF,\, \l}(x) = h^i_{\FF}(x\l) := \frac{1}{b^{2g}}h^i_{gen}(A, \mu_b^*(\FF) \tensor L^{ab}),
\]
where $x = \frac{a}{b} \in \Q$ and $b > 0$. Since $\mu_b^*(\l) = b^2\l$, the pullback via $\mu_b$ of the rational class $\frac{a}{b}\l$ is $ab\l$. Moreover $\deg(\mu_b) = b^{2g}$, therefore, as explained in Remark 2.2 of \emph{op.cit.}, one may think of $h^i_{\FF, \l}(x)$ as the (generic) cohomological rank $h^i(A, \FF \la x\l \ra)$ of the $\Q$-\emph{twisted complex} $\FF \la x\l \ra$, which is defined similarly to \cite{laI}, \S6.2A. Namely, $\FF \la x \l \ra$ is the equivalence class of the pair $(\FF, x \l)$, where the equivalence is by definition
\[
(\FF \tensor L^m, x \l) \sim (\FF, (m+x)\l),
\]
for \emph{any} line bundle $L$ representing $\l$ and $m \in \Z$. Note that an ``untwisted'' object $\FF$ may be naturally seen as the $\Q$-twisted object $\FF \la 0\l \ra$. Moreover we have that $\FF \tensor P_{\alpha} \la x\l \ra = \FF \la x\l \ra$, for any $\alpha \in \Pic0(A)$.

In \cite{jipa} the authors introduced such notion assuming that the characteristic of the ground field $\K$ is zero. However the above definition makes sense in any characteristic. The main point consists in showing that it does not depend  on the representation $x = \frac{a}{b}$. To this purpose we need to verify that the quantity $h^i_{gen}(A, \FF)$ is multiplicative with respect to any isogeny $\mu_m$:
\begin{equation}\label{welldef}
h^i_{gen}(A, \mu_m^*\FF) = m^{2g}h^i_{gen}(A, \FF).
\end{equation}
This is checked in \emph{op.cit.} under the assumption that $\Char(\K)=0$. However the same thing can be checked removing such assumption as follows. By cohomology and base change, $h^i_{gen}(A, \mu_m^*\FF)$ is the generic rank of the Fourier-Mukai-Poincar\'{e} transform $R^i\Phi_{\PP}(\mu_m^*\FF)$. Moreover $R^i\Phi_{\PP}(\mu_m^*\FF) = \hat{\mu}_{m*}R^i\Phi_{\PP}(\FF)$ (\cite{mukai} (3.4)), where $\hat{\mu}_m : \Ad \rightarrow \Ad$ is the dual isogeny of $\mu_m$, i.e.\ it is the multiplication-by-$m$ isogeny of $\Ad$. Since the morphism $\hat{\mu}_m$ is in any case flat, the generic rank of $\hat{\mu}_{m*}R^i\Phi_{\PP}(\FF)$ is that of $R^i\Phi_{\PP}(\FF)$ multiplied by the degree of $\hat{\mu}_m$. Therefore we get (\ref{welldef}). Granting this, $h^i_{\FF}(x\l)$ is well-defined: if we take another representation of $x$, say $x = \frac{am}{bm}$, then
\begin{equation*}
\begin{split}
h^i_{\FF}(x\l) &= \frac{1}{(bm)^{2g}}h^i_{gen}(A, \mu_{bm}^*(\FF) \tensor L^{abm^2}) \\
               &= \frac{1}{(bm)^{2g}}h^i_{gen}(A, \mu_m^*(\mu_{b}^*(\FF) \tensor L^{ab})) \\
							 &= \frac{1}{b^{2g}}h^i_{gen}(A, \mu_{b}^*(\FF) \tensor L^{ab}).
\end{split}
\end{equation*} 
\begin{remark} Although we won't need this in this paper, we remark that from the above discussion it follows that the basic properties satisfied by the cohomological rank functions described in \S2 of \cite{jipa} -- especially the fundamental transformation formula with respect to the Fourier-Mukai-Poincar\'e transform  Prop. 2.3 of \emph{op.cit.} and its consequences -- work  in any characteristic.
\end{remark}

Using the cohomological rank functions it is possible to introduce several invariants attached to a polarized abelian variety $(A, \l)$. Let us recall that, given a line bundle $L$ that represents the class $\l$, the \emph{kernel bundle} $M_L$ associated to $L$ is by definition the kernel of the evaluation map $H^0(A, L) \tensor \OO_A \rightarrow L$. If $L$ is basepoint-free, then $M_L$ sits in the exact sequence
\begin{equation*}\label{defkb}
0 \rightarrow M_L \rightarrow H^0(A, L) \tensor \OO_A \rightarrow L \rightarrow 0.
\end{equation*}
\begin{definition}\label{invariants}
Let $(A, \l)$ be a polarized abelian variety. Then we consider
\[
\eps_1(\l) := \Inf \{ x \in \Q \ |\ h^1_{\II_p}(x\l) = 0 \},  
\]
where $\II_p$ is the ideal sheaf of a closed point $p \in A$ and, if $\l$ is basepoint-free
\[
\kappa_1(\l) := \Inf \{ x \in \Q \ |\ h^1_{M_L}(x\l) = 0 \}, 
\]
where $M_L$ is the kernel bundle associated to a line bundle $L$ representing $\l$.
\end{definition}
\begin{remark}
The above invariants are well-defined, i.e.\ $\eps_1(\l)$ does not depend on the point $p$, and $\kappa_1(\l)$ is independent from the representing line bundle $L$. We point out that -- although there no examples so far -- $\eps_1(\l)$ and $\kappa_1(\l)$ could be irrational. However, as will be clear later on, this does not create any trouble.
\end{remark} 
\noindent The relation between the above two constants was established by Jiang and Pareschi:
\begin{theorem}[\cite{jipa} Theorem D]\label{thmDjipa}
Let $\l$ be a basepoint-free polarization. Then
\[
\kappa_1(\l) = \frac{\eps_1(\l)}{1-\eps_1(\l)}.
\]
\end{theorem}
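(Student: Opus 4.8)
The plan is to connect the two invariants through the evaluation sequence defining $M_L$ together with the Fourier--Mukai--Poincar\'e transform, so that the asserted formula, which is equivalent to the additive relation $\frac{1}{\eps_1(\l)} = 1 + \frac{1}{\kappa_1(\l)}$ on reciprocals, emerges from a M\"obius change of the twisting parameter. First I would read both invariants as vanishing thresholds for the first cohomology of a $\Q$-twisted sheaf: $\kappa_1(\l)$ governs $M_L\la x\l\ra$ and $\eps_1(\l)$ governs $\II_p\la x\l\ra$. By the homogeneity under $\mu_b$ used in \S2 to prove well-definedness, the whole problem reduces to understanding how these two sheaves are exchanged by the transform.

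The heart of the argument is the identification of the transform of $M_L$. Applying $R\Phi_{\PP}$ to
\[
0 \to M_L \to H^0(A,L)\tensor\OO_A \to L \to 0,
\]
and using that an ample $L$ satisfies $\mathrm{WIT}_0$ with transform $\widehat{L} := R^0\Phi_{\PP}(L)$ a vector bundle of rank $h^0(L)$, together with $R\Phi_{\PP}(\OO_A) = k(\hat 0)[-g]$, expresses the cohomology sheaves of $R\Phi_{\PP}(M_L)$ in terms of $\widehat{L}$ up to a skyscraper correction at the origin. The decisive input is Mukai's formula $\varphi_{\l}^*\widehat{L} \cong H^0(A,L)\tensor L^{-1}$, which recovers $\widehat{L}$ from $L^{-1}$ after pulling back by the isogeny $\varphi_{\l}$: this is the mechanism that couples the polarization $\l$ on $A$ with the dual data on $\Ad$, and it is precisely where the relevant ``degree'' and ``degree minus one'' get separated (compare the slope heuristic on elliptic curves below).

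With the transform in hand, I would invoke the Fourier--Mukai transformation formula for cohomological rank functions (Prop. 2.3 of \cite{jipa}, valid in arbitrary characteristic by the Remark in \S2). This formula equates the vanishing of $h^1_{M_L}(x\l)$ with the vanishing of a cohomological rank of $R\Phi_{\PP}(M_L)$ with respect to the dual polarization, under a change of variable of the shape $x \mapsto \frac{x}{1-x}$. Feeding the transform computation through it should carry the threshold defining $\kappa_1(\l)$ exactly onto the threshold defining $\eps_1(\l)$, yielding $\kappa_1(\l) = \frac{\eps_1(\l)}{1-\eps_1(\l)}$. The basepoint-freeness hypothesis is used here to ensure that $M_L$ is a genuine vector bundle and that the transforms involved are concentrated in a single degree, so that the formula applies cleanly.

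The step I expect to be the main obstacle is twofold: first, the exact identification of $R\Phi_{\PP}(M_L)$ with the correct bookkeeping of shifts, twists by $P_{\alpha}$, and the skyscraper term at $\hat 0$; and second, tracking the fractional parameter $x$ through both the pullback along $\varphi_{\l}$ and the transformation formula so that the resulting M\"obius map is exactly $x\mapsto \frac{x}{1-x}$ and not some other normalization. A good consistency check, which also pins down the normalization, is the elliptic curve case: for a line bundle of degree $d\geq 2$ the slope computations give $\eps_1 = \frac{1}{d}$ (from $\deg(\II_p\la x\l\ra) > 0$) and $\kappa_1 = \frac{1}{d-1}$ (from positivity of the slope of $M_L\la x\l\ra$), in agreement with the claimed identity and exhibiting the ``off by one in rank'' behaviour that the transform must encode.
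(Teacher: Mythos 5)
Before anything else, note that the paper does not prove this statement at all: it is imported verbatim from Jiang--Pareschi (\cite{jipa}, Theorem D), so your attempt can only be weighed against the argument given there. Your toolbox is the right one --- the Fourier--Mukai transform of the evaluation sequence, Mukai's isomorphism $\varphi_{\l}^*\widehat{L}\cong H^0(A,L)\tensor L^{-1}$, and the transformation formula for cohomological rank functions --- and your elliptic-curve normalization check is correct. But as written the proposal has a genuine gap, and both of the concrete technical claims it does make are wrong or unjustified.

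The gap is that your pipeline never produces the ideal sheaf, hence never touches $\eps_1(\l)$. Computing the transform of $M_L$ from the evaluation sequence gives $R^1\Phi_{\PP}(M_L)\cong\widehat{L}$ and $R^g\Phi_{\PP}(M_L)\cong H^0(A,L)\tensor k(\hat{0})$ (for $g\geq 2$), so the transform is \emph{never} concentrated in a single degree, basepoint-freeness notwithstanding: that claim is false. Feeding this two-term complex into the transformation formula expresses $h^1_{M_L,\l}(x)$ as the dimension of the cokernel of a connecting map from the skyscraper contribution to (Serre duals of) cohomological ranks of $\widehat{L}$; no cohomological rank function of $\II_p$ appears anywhere, and identifying that cokernel with a value of $h^1_{\II_0,\l}$ is precisely the content of the theorem, not an output of the formula. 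Relatedly, the change of variable in the transformation formula is, up to normalization, $x\mapsto -1/x$, not $x\mapsto\frac{x}{1-x}$; the hyperbolic map can only arise from the composite ``twist by $L$, invert, twist by $L$'', and one must check that the factors $\chi(\l)$ introduced by the two isogeny pullbacks (via $\varphi_{\l}^*\hat{\l}=\chi(\l)\l$ and $\varphi_{\hat{\l}}^*\l=\chi(\l)\hat{\l}$) cancel --- exactly the bookkeeping you defer, and where it is easy to generate spurious powers of $\chi(\l)$. The efficient repair, which is in substance the route of \cite{jipa}, is to transform $\II_0\tensor L$ rather than $M_L$: for basepoint-free $\l$ the sheaf $\II_0\tensor L$ is $IT(0)$, its transform is $W=\Ker\bigl(\widehat{L}\to\OO_{\Ad}\bigr)$ (fiberwise evaluation of sections at $0$), and Mukai's formula then gives the key identity $\varphi_{\l}^*W\cong M_L\tensor L^{-1}$. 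From this, the transformation formula yields the functional equation $h^1_{M_L,\l}(x)=\chi(\l)(1+x)^g\,h^1_{\II_0,\l}\bigl(\tfrac{x}{1+x}\bigr)$ for $x>0$, and the theorem follows since $x\mapsto\frac{x}{1+x}$ is an increasing bijection of $(0,+\infty)$ onto $(0,1)$, basepoint-freeness guaranteeing $\eps_1(\l)<1$. Your route through $R\Phi_{\PP}(M_L)$ can be completed only by recognizing the connecting map above as such an evaluation map --- that is, by carrying out this very argument.
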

\begin{remark} From this result, in \emph{op.cit.} it is derived that $\kappa_1(\l) < 1$, i.e.\ $\l$ is projectively normal,  if and only if     $\eps_1(\l) < \frac{1}{2}$ (see in particular \cite{jipa}, Corollary 8.2 (b)). Our Theorem \ref{main1} is an extension of the ``if'' implication to higher syzygies. 
\end{remark}

\subsection{Proof of Proposition \ref{nadel}. }
Only in this subsection we make the assumption that the ground field $\K$ is $\C$.

Let $r \in \Q$ such that there exists an effective $\Q$-divisor $F$ on $A$ with 
\begin{subequations}
\begin{align}
r L &- F\ \textrm{ample}, \label{div1} \\ 
\II_0 &= \JJ(A, F). \label{div2}
\end{align}
\end{subequations}
To prove the Proposition we need to prove that 
\begin{equation}\label{vanish}h^1_{\II_0}(r\l)=0.
\end{equation}
Writing $r=\frac{a}{b}$ with $b > 0$, this means that 
\[h^1_{gen}(L^{ab}\otimes \mu_b^*\II_0)=0.\]
But, by (\ref{div2}), the left hand side is $h^1_{gen}(L^{ab}\otimes \mu_b^*\JJ(A, F))=h^1_{gen}(L^{ab}\otimes \JJ(A, \mu_b^*F))$, where we used that forming multiplier ideals commutes with pulling back under \'etale morphism (see \cite{laI}, Example 9.5.44). Since $\mu_b^*F \equiv_{\textrm{num}} b^2F$, it follows from (\ref{div1}) that $L^{ab} -\mu_b^*F$ is ample. Therefore (\ref{vanish}) follows from Nadel's vanishing.

\section{Generic vanishing of $\Q$-twisted sheaves on abelian varieties}

Following \S5 of \cite{jipa}, one can extend the usual notions of \emph{generic vanishing} to the $\Q$-twisted setting:
\begin{definition/theorem}[\cite{jipa} Theorem 5.1]\label{GVeq}
(1) A $\Q$-twisted \emph{sheaf} $\FF \la x\l \ra$, with $x = \frac{a}{b}$, is said to be $GV$ if
\[
\Codim_{\Ad}\ \Supp (R^i\Phi_{\PP}((\mu_b^*\FF) \tensor L^{ab})) \geq i, \quad \textrm{for all}\ i > 0. 
\]
Equivalently the transform\footnote{Recall that $R\Phi_{\PP^{\dual}}(\cdot) = (-1_{\Ad})^*R\Phi_{\PP}$.} $R\Phi_{\mathcal{P}^{\dual}}((\mu_b^*\mathcal{F}^{\dual}) \tensor L^{-ab})$ is a sheaf concentrated in cohomological degree $g$, i.e.
\[
R\Phi_{\PP^{\dual}}((\mu_b^*\FF^{\dual}) \tensor L^{-ab}) = R^g\Phi_{\PP^{\dual}}((\mu_b^*\FF^{\dual}) \tensor L^{-ab})[-g].
\]
(2) It is said to be $IT(0)$ if the transform
\[
R\Phi_{\mathcal{P}}((\mu_b^*\mathcal{F}) \tensor L^{ab}) = R^0\Phi_{\mathcal{P}}((\mu_b^*\mathcal{F}) \tensor L^{ab})
\] 
is concentrated in cohomological degree $0$.   
\end{definition/theorem}
\begin{remark}
(a) The above definitions do not depend on the representation $x = \frac{a}{b}$. For example for any $i$, $R^i\Phi_{\PP}(\mu_m^*\FF) = \hat{\mu}_{m*}R^i\Phi_{\PP}(\FF)$ (\cite{mukai} (3.4)) where $\hat{\mu}_m$ is the dual isogeny of $\mu_m$, therefore by cohomology and base change we see that $\Supp (R^i\Phi_{\PP}(\mu_m^*\FF))$ corresponds to the image of $\Supp (R^i\Phi_{\PP}(\FF))$ via the isogeny $\hat{\mu}_m$. \\
(b) They neither depend on the line bundle $L$ representing the class $\l$. Indeed, thanks to the exchange of translations and tensor product by elements of $\Pic0 A$ (\cite{mukai} (3.1)), if $L_0$ is another line bundle algebraically equivalent to $L$, then $R^i\Phi_{\PP}((\mu_b^*\FF) \tensor L_0^{ab})$ is a translate of $R^i\Phi_{\PP}((\mu_b^*\FF) \tensor L^{ab})$. 
\end{remark}
\noindent By cohomology and base change one has that 
\begin{equation}\label{incl1}
\begin{split}
\Supp (R^i\Phi_{\PP}((\mu_b^*\FF) \tensor L^{ab})) &\subseteq \{ \alpha \in \Ad \ |\ H^i(A, (\mu_b^*\FF) \tensor L^{ab} \tensor P_{\alpha}) \neq 0 \} \\   
           &=: V^i((\mu_b^*\FF) \tensor L^{ab}) 
\end{split}
\end{equation}
and, if $V^{i+1}((\mu_b^*\FF) \tensor L^{ab}) = \varnothing$, then equality holds. Moreover, we have that the $\Q$-twisted sheaf $\FF \la x\l \ra$ is $GV$ if and only if
\[
\Codim_{\Ad}\ V^i((\mu_b^*\FF) \tensor L^{ab}) \geq i, 
\]
for all $i > 0$ and for any representation $x = \frac{a}{b}$ (\cite{papogv} Lemma 3.6). By cohomology and base change again, $\FF \la x\l \ra$ is $IT(0)$ if and only if 
\[
V^i((\mu_b^*\FF) \tensor L^{ab}) = \varnothing
\]
for all $i > 0$ and for any representation $x = \frac{a}{b}$. In particular, we see that an $IT(0)$ $\Q$-twisted sheaf is $GV$.

These generic vanishing concepts are strongly related to the invariants introduced in Definition \ref{invariants}, as explained in \S8 of \cite{jipa}. Namely, we have 
\begin{lemma}[\cite{jipa}, p.\ 25]\label{lemmajipa}
Given two polarizations $\l$ and $\n$ -- with $\n$ basepoint-free -- and a rational number $x$, the fact that $\eps_1(\l) < x$ (resp.\ $\kappa_1(\n) < x$) is equivalent to the fact that the $\Q$-twisted sheaf $\II_p\la x\l \ra$ (resp.\ $M_N\la x\n \ra$) is $IT(0)$.
\end{lemma}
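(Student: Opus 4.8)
The plan is to prove both equivalences by a single mechanism, handling $\II_p$ and $M_N$ in parallel and reducing the $IT(0)$ condition to one $H^1$-vanishing statement. Since $\eps_1(\l)\ge 0$ and $\kappa_1(\n)\ge 0$, for $x\le 0$ neither $\eps_1(\l)<x$ (resp.\ $\kappa_1(\n)<x$) nor the $IT(0)$ property can hold (for $ab\le 0$ the twisting bundle is not ample and the relevant cohomology does not vanish), so there is nothing to prove; thus I may assume $x=\tfrac{a}{b}>0$ with $b>0$, hence $ab>0$ and $B:=L^{ab}\tensor P_\alpha$ ample.

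First I would kill the higher cohomology. As $\mu_b$ is a finite flat isogeny, $\mu_b^*$ is exact, and pulling back $0\to\II_p\to\OO_A\to\OO_p\to 0$ gives $0\to\mu_b^*\II_p\to\OO_A\to\OO_Z\to 0$ with $Z=\mu_b^{-1}(p)$ finite. Tensoring by $B$ and using Mumford vanishing $H^{>0}(A,B)=0$ (valid in every characteristic) together with $H^{>0}(Z,\OO_Z\tensor B)=0$, the long exact sequence yields $H^i(A,\mu_b^*\II_p\tensor B)=0$ for all $i\ge 2$ and all $\alpha$. The same computation applied to the sequence $0\to M_N\to H^0(N)\tensor\OO_A\to N\to 0$ — which is exact precisely because $\n$ is basepoint-free — gives $H^i(A,\mu_b^*M_N\tensor B)=0$ for $i\ge 2$. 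Hence in both cases $V^i=\varnothing$ for $i\ge 2$, so $R^{\ge 2}\Phi_{\PP}=0$, and therefore $IT(0)$ is equivalent to the vanishing of $H^1$ for \emph{all} $\alpha$, while $GV$ is equivalent to that $H^1$-vanishing for \emph{general} $\alpha$ only, i.e.\ to $h^1_{\II_p,\l}(x)=0$ (resp.\ $h^1_{M_N,\n}(x)=0$), the condition defining the zero-set of $\eps_1$ (resp.\ $\kappa_1$). The entire content is thus the passage from generic vanishing to vanishing for all $\alpha$.

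For the forward implication I would exploit the strict inequality. By definition of the infimum, $\eps_1(\l)<x$ furnishes a rational $x_0$ with $\eps_1(\l)\le x_0<x$ and $h^1_{\II_p,\l}(x_0)=0$, i.e.\ $\II_p\la x_0\l\ra$ is $GV$. Writing $\II_p\la x\l\ra=\bigl(\II_p\la x_0\l\ra\bigr)\la (x-x_0)\l\ra$ with $x-x_0>0$, I would invoke the positivity principle that a strictly positive $\Q$-twist of a $GV$ sheaf by the polarization is $IT(0)$, giving $IT(0)$ at $x$; the argument for $M_N$ is identical. Conversely, $IT(0)$ at $x$ implies $GV$, hence $h^1_{\II_p,\l}(x)=0$ and $\eps_1(\l)\le x$. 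The inequality must be strict: by the inf-description, the transform $R^1\Phi_{\PP}(\mu_{b'}^*\II_p\tensor L^{a'b'})$ has positive generic rank for every $x'<\eps_1(\l)$, so if $\eps_1(\l)=x$ then $IT(0)$ at $x$ would make this transform vanish at the threshold, which by upper semicontinuity of the vanishing locus in the $\Q$-twist parameter would force it to vanish generically for $x'$ slightly below $\eps_1(\l)$ as well — a contradiction. Hence $\eps_1(\l)<x$, and the $IT(0)$ locus is exactly the open half-line $\{x>\eps_1(\l)\}$.

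The main obstacle is the positivity upgrade $GV\Rightarrow IT(0)$ under a strictly positive $\Q$-twist, and its validity in arbitrary characteristic. I would prove it by homogeneity: re-representing $x-x_0=\tfrac{c}{d}=\tfrac{ck}{dk}$ and pulling everything back along the finite flat isogeny $\mu_{dk}$ — under which $GV$ is preserved and the twisting bundle becomes the large power $L^{cdk^2}$ — reduces the statement to the classical fact that a $GV$-sheaf tensored by a sufficiently positive ample line bundle satisfies $IT(0)$; one then descends, using that the transform under the dual isogeny $\widehat{\mu}_{dk}$ is finite flat so that the support conditions are preserved. Char-freeness is safe throughout: the $\Q$-twisted generic-vanishing formalism of \S5 of \cite{jipa} works in any characteristic (as recorded in the Remark following its transformation formula), $\mu_b$ is finite flat so all the pullbacks above are exact, and Mumford vanishing for ample line bundles on abelian varieties holds in every characteristic.
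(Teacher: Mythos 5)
Your overall architecture is close to the paper's own proof: reduce both $IT(0)$ and $GV$ to statements about $H^1$ alone (killing $V^{\ge 2}$ by Mumford vanishing on the ambient ample twists), then handle the forward implication via ``$GV$ plus a strictly positive twist is $IT(0)$''. For that last step the paper simply cites Hacon's criterion (\cite{jipa}, Theorem 5.2(a)); your pullback-to-a-common-denominator proof of the special case you need, via the untwisted preservation of vanishing (\cite{papoIII}, Proposition 3.1), is essentially the paper's own Proposition \ref{heartprop} applied with one factor equal to $\OO_A\la (x-x_0)\l\ra$, and it is correct and characteristic-free (your final ``descent'' step is even unnecessary, since $IT(0)$ of a $\Q$-twisted sheaf is by definition checked on any one representative, independence of the representation having already been established).

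The genuine gap is the strictness of the inequality in the converse direction. You correctly deduce $\eps_1(\l)\le x$ from $IT(0)$, but to exclude $\eps_1(\l)=x$ you invoke ``upper semicontinuity of the vanishing locus in the $\Q$-twist parameter''. No such principle is available: for varying $x'=a'/b'$ the objects $\mu_{b'}^*\II_p\tensor L^{a'b'}$ are obtained by pullback along \emph{different} isogenies, and they do not form a flat family over any parameter scheme having the rational numbers $x'$ as points; semicontinuity of cohomology is a statement about flat families and cannot be applied across different denominators. What your argument actually needs is the openness of the $IT(0)$ condition in the twisting parameter: if $\II_p\la x\l\ra$ is $IT(0)$, then $\II_p\la (x-y)\l\ra$ is $IT(0)$ for all sufficiently small rational $y>0$. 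This is precisely \cite{jipa}, Theorem 5.2(c), which the paper cites at exactly this point; it is a nontrivial theorem with its own proof, not a formal consequence of semicontinuity. With that citation (or a proof of it) substituted for the semicontinuity claim, your argument closes; without it, you have only established the implication $\eps_1(\l)<x \Rightarrow IT(0)$ together with the strictly weaker converse $IT(0) \Rightarrow \eps_1(\l)\le x$.
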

\noindent For reader's convenience we explicitly write down the case of $\eps_1(\l)$: assume that $\eps_1(\l) < x \in \Q$ and fix a sufficiently small $\eta > 0$ such that $x_0 := \eps_1(\l) + \eta \in \Q$ and $x_0 < x$. By (\ref{incl1}), $\II_p\la x_0 \l \ra$ is $GV$, therefore Hacon's criterion (see \cite{jipa}, Theorem 5.2 (a)) implies that $\II_p \la (x_0 + (x - x_0))\l \ra = \II_p \la x \l \ra$ is $IT(0)$. Conversely suppose that $\II_p \la x\l \ra$ is $IT(0)$, then $\II_p \la (x-y)\l \ra$ is still $IT(0)$, for a sufficiently small $y \in \Q^+$ (\cite{jipa} Theorem 5.2 (c)). Then $\eps_1(\l) < x - y < x$. For $\kappa_1(\n)$, the argument is similar.

The following is a $\Q$-twisted analog of a well known property of ``preservation of vanishing''(\cite{papoIII} Proposition 3.1).
\begin{proposition}\label{heartprop}
Assume that $\mathcal{F}$ and $\mathcal{G}$ are coherent sheaves, and that one of them is locally free. If $\mathcal{F}\langle x\l \rangle$ is $IT(0)$ and $\mathcal{G}\langle y\l \rangle$ is $GV$, then $\mathcal{F}\langle x\l \rangle \tensor \mathcal{G}\langle y\l \rangle := (\mathcal{F} \tensor \mathcal{G})\langle (x+y)\l \rangle$ is $IT(0)$.
\end{proposition}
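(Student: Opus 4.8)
The plan is to reduce the $\mathbb{Q}$-twisted assertion to the ordinary ``preservation of vanishing'' of Pareschi--Popa (\cite{papoIII}, Proposition 3.1). The point is that, by Definition/Theorem \ref{GVeq}, the $GV$ and $IT(0)$ properties of a $\mathbb{Q}$-twisted sheaf are \emph{by definition} the corresponding properties of an honest coherent sheaf obtained by pulling back under a multiplication isogeny and twisting by a power of $L$. First I would put the two twists over a common denominator: since neither property depends on the chosen representation of the rational twist (see the Remark following Definition/Theorem \ref{GVeq}), I may write $x = \frac{a}{b}$ and $y = \frac{c}{b}$ with the same $b > 0$, so that $x + y = \frac{a+c}{b}$.

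With this normalization all three hypotheses and the conclusion become statements about honest sheaves on $A$. Set $\mathcal{F}' := \mu_b^*\mathcal{F}\otimes L^{ab}$ and $\mathcal{G}' := \mu_b^*\mathcal{G}\otimes L^{cb}$. Then $\mathcal{F}\langle x\l\rangle$ being $IT(0)$ says exactly that $\mathcal{F}'$ is $IT(0)$, and $\mathcal{G}\langle y\l\rangle$ being $GV$ says exactly that $\mathcal{G}'$ is $GV$. The elementary identity driving the reduction is
\[
\mu_b^*(\mathcal{F}\otimes\mathcal{G})\otimes L^{(a+c)b} \;=\; (\mu_b^*\mathcal{F}\otimes L^{ab})\otimes(\mu_b^*\mathcal{G}\otimes L^{cb}) \;=\; \mathcal{F}'\otimes\mathcal{G}',
\]
using that $\mu_b^*$ commutes with $\otimes$ and that $L^{(a+c)b} = L^{ab}\otimes L^{cb}$. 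Hence $(\mathcal{F}\otimes\mathcal{G})\langle(x+y)\l\rangle$ is $IT(0)$ if and only if the honest sheaf $\mathcal{F}'\otimes\mathcal{G}'$ is $IT(0)$. Since $\mu_b$ is finite flat and twisting by a line bundle preserves local freeness, whichever of $\mathcal{F},\mathcal{G}$ is locally free produces a locally free $\mathcal{F}'$ or $\mathcal{G}'$; this is precisely the hypothesis that makes $\mathcal{F}'\otimes\mathcal{G}'$ coincide with the derived tensor product (no higher $\mathrm{Tor}$) and that lets Proposition 3.1 of \cite{papoIII} apply, yielding that $\mathcal{F}'\otimes\mathcal{G}'$ is $IT(0)$, as required.

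The conceptual content is thus entirely carried by the cited untwisted result, and the reduction is a formal matching of definitions. The main obstacle I anticipate is therefore not the reduction but ensuring that preservation of vanishing is available \emph{in the present generality}: it must be applied over an algebraically closed field of arbitrary characteristic, and with local freeness allowed on either factor. Both are safe. The statement is established purely through the Fourier--Mukai--Poincar\'e formalism -- the projection formula, cohomology and base change, and the behaviour of $R\Phi_{\PP}$ under duality -- all of which are characteristic-independent, in line with the observation following \eqref{welldef} that the basic theory of \cite{jipa} transports to any characteristic; and the symmetric role of the two factors follows since the local-freeness hypothesis serves only to underive the tensor product. Recording these two points completes the argument.
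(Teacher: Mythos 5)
Your proof is correct and takes essentially the same route as the paper's: both reduce the statement to the untwisted ``preservation of vanishing'' of Pareschi--Popa (\cite{papoIII}, Proposition 3.1) by realizing the hypotheses and the conclusion as statements about honest sheaves pulled back along a single multiplication isogeny. The only difference is cosmetic: you pass to a common denominator $b$ at the outset by invoking the representation-independence remark, whereas the paper keeps $x=\frac{a}{b}$, $y=\frac{c}{d}$ and re-derives the needed facts (that $(\mu_{bd}^*\mathcal{F})\otimes L^{abd^2}$ is $IT(0)$ and $(\mu_{bd}^*\mathcal{G})\otimes L^{b^2cd}$ is $GV$) directly from Mukai's formula $R\Phi_{\PP}\circ\mu_m^*=\hat{\mu}_{m*}\circ R\Phi_{\PP}$.
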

\begin{proof}
Let $x = \frac{a}{b}$ and $y = \frac{c}{d}$, with $b, d > 0$. So $x+y = \frac{ad+bc}{bd}$. We want to prove that $\mu_{bd}^*(\mathcal{F} \tensor \mathcal{G}) \tensor L^{(ad+bc)bd}$ is an $IT(0)$ sheaf. By hypothesis $\mathcal{F}\langle x\l \rangle$ is $IT(0)$, hence 
\[
\mu_d^*((\mu_b^*\mathcal{F}) \tensor L^{ab}) = (\mu_{bd}^*\mathcal{F}) \tensor L^{abd^2}
\] 
is an $IT(0)$ sheaf, because $R\Phi_{\mathcal{P}}(\mu_d^*((\mu_b^*\mathcal{F}) \tensor L^{ab})) = \hat{\mu}_{d *}R\Phi_{\mathcal{P}}((\mu_b^*\mathcal{F}) \tensor L^{ab}) = \hat{\mu}_{d *}R^0\Phi_{\mathcal{P}}((\mu_b^*\mathcal{F}) \tensor L^{ab})$ (\cite{mukai} (3.4)) is concentrated in degree 0, where $\hat{\mu}_d : \widehat{A} \rightarrow \widehat{A}$ is the dual isogeny of $\mu_d$. Likewise, if $\mathcal{G}\langle y\l \rangle$ is $GV$, by using the equivalence in Definition \ref{GVeq} \emph{(1)}, we have that 
\[
\mu_b^*((\mu_d^*\mathcal{G}) \tensor L^{cd}) = (\mu_{bd}^*\mathcal{G}) \tensor L^{b^2cd}
\]
is a $GV$ sheaf. Since 
\[
\mu_{bd}^*(\mathcal{F} \tensor \mathcal{G}) \tensor L^{(ad+bc)bd} = ((\mu_{bd}^*\mathcal{F}) \tensor L^{abd^2}) \tensor ((\mu_{bd}^*\mathcal{G}) \tensor L^{b^2cd}),
\]
we conclude by applying the ``preservation of vanishing'' for (untwisted) coherent sheaves (\cite{papoIII} Proposition 3.1).   
\end{proof}    
  
For our purposes, the central result of this section is the following
\begin{proposition}\label{mainprop}
Let $p$ be a non-negative integer. If 
\[
\eps_1(\l) < \frac{1}{p+2},
\]
then $M_L^{\otimes(p+1)} \tensor L^h$ is $IT(0)$ for all $h \geq 1$.
\end{proposition}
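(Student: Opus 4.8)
The plan is to realize $M_L^{\otimes(p+1)}\otimes L^h$ as an iterated $\Q$-twisted tensor product of copies of the kernel bundle $M_L$ and to apply the preservation-of-vanishing Proposition \ref{heartprop} repeatedly. First I would observe that the hypothesis $\eps_1(\l) < \frac{1}{p+2} \leq \frac12 < 1$ forces $\l$ to be basepoint-free by property (a), so that $M_L$ is a genuine vector bundle and the invariant $\kappa_1(\l)$ is defined. Theorem \ref{thmDjipa} then converts the bound on $\eps_1$ into a bound on $\kappa_1$: since the function $t \mapsto \frac{t}{1-t}$ is increasing on $[0,1)$, from $\eps_1(\l) < \frac{1}{p+2}$ we obtain
\[
\kappa_1(\l) = \frac{\eps_1(\l)}{1-\eps_1(\l)} < \frac{1/(p+2)}{1 - 1/(p+2)} = \frac{1}{p+1}.
\]

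Next I would distribute the twist evenly among $p+1$ factors. For each $h \geq 1$ set $x := \frac{h}{p+1} \geq \frac{1}{p+1} > \kappa_1(\l)$. By Lemma \ref{lemmajipa} the $\Q$-twisted sheaf $M_L\la x\l\ra$ is then $IT(0)$, in particular $GV$. Using the definition of the $\Q$-twisted tensor product, the product of $p+1$ copies is
\[
M_L\la x\l\ra \otimes \cdots \otimes M_L\la x\l\ra = M_L^{\otimes(p+1)}\la (p+1)x\,\l\ra = M_L^{\otimes(p+1)}\la h\l\ra,
\]
which, $h$ being an integer, is precisely the honest object $M_L^{\otimes(p+1)}\otimes L^h$ under the equivalence $(\FF\otimes L^m, x\l)\sim(\FF,(m+x)\l)$.

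Finally I would iterate Proposition \ref{heartprop}. Since $M_L$, and hence every tensor power $M_L^{\otimes k}$, is locally free, at each step I may combine the accumulated $IT(0)$ factor $M_L^{\otimes k}\la kx\,\l\ra$ with one further $GV$ factor $M_L\la x\l\ra$ to conclude that $M_L^{\otimes(k+1)}\la (k+1)x\,\l\ra$ is again $IT(0)$; after $p$ such steps this gives the claim. The only quantitative input is the strict inequality $\kappa_1(\l) < \frac{1}{p+1}$, which is exactly what guarantees that the integer budget $h \geq 1$ can be split into $p+1$ equal pieces each still exceeding $\kappa_1(\l)$. I expect this conversion from $\eps_1$ to $\kappa_1$ via Theorem \ref{thmDjipa} to be the crux of the argument; the remainder is formal bookkeeping with preservation of vanishing, the only point requiring care being to keep track that every factor is locally free so that Proposition \ref{heartprop} genuinely applies at each stage.
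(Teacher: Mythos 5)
Your proof is correct and takes essentially the same route as the paper: convert the hypothesis into $\kappa_1(\l) < \frac{1}{p+1}$ via Theorem \ref{thmDjipa}, deduce from Lemma \ref{lemmajipa} that the suitably twisted kernel bundle is $IT(0)$, and iterate the preservation-of-vanishing Proposition \ref{heartprop} over $p+1$ locally free factors. The only (cosmetic) difference is bookkeeping: you spread the twist evenly, taking $x = \frac{h}{p+1}$ on each copy of $M_L$, whereas the paper twists each copy by exactly $\frac{1}{p+1}$ and absorbs the leftover into an extra factor $\OO_A\la (h-1)\l\ra$, which forces its small case distinction between $h=1$ and $h>1$ that your version avoids.
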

\begin{proof}
Let $L$ be a line bundle representing $\l$, and let $M_L$ be the kernel of the evaluation morphism $H^0(A, L) \tensor \OO_A \rightarrow L$. The assumption on $\eps_1(\l)$ implies, in particular, that $\l$ is basepoint-free and, using Theorem \ref{thmDjipa}, we get
\begin{align*}
\kappa_1(\l) &= \frac{\eps_1(\l)}{1 - \eps_1(\l)} \\
             &= -1 + \frac{1}{1-\eps_1(\l)} \\
						&< -1 + \frac{p+2}{p+1} \\
						&= \frac{1}{p+1}.
\end{align*}
By Lemma \ref{lemmajipa}, this is equivalent to say that $M_L\la \frac{1}{p+1} \l \ra$ is an $IT(0)$ $\Q$-twisted sheaf. Fix now an integer $h \geq 1$ and write $M_L^{\otimes(p+1)} \tensor L^h = M_L^{\otimes(p+1)} \tensor L \tensor L^{h-1}$ as the $\Q$-twisted sheaf
\[
M_L^{\otimes(p+1)} \la (\frac{p+1}{p+1} + h-1) \l \ra = \bigl( M_L\la \frac{1}{p+1} \l \ra \bigr)^{\otimes(p+1)} \tensor \OO_A \la (h-1)\l \ra.  
\]
Since $L^{h-1}$ is ample -- hence $IT(0)$ -- if $h > 1$, or it is trivial if $h =1$, and $M_L\la \frac{1}{p+1} \l \ra$ is $IT(0)$, we have that $M_L^{\otimes(p+1)} \tensor L^h$ is $IT(0)$ thanks to the ``preservation of vanishing'' (Proposition \ref{heartprop}).
\end{proof}

\section{Syzygies and the property $(N_p)$}
We recall the definition and geometric meaning of the property $(N_p)$ in more detail. Let $X$ be a projective variety, defined over an algebraically closed field $\K$. If $L$ gives an embedding
\[
\phi_{|L|} : X \hookrightarrow \P = \P(H^0(X, L)^{\dual}),
\]   
then $L$ is said to \emph{satisfy the property} $(N_p)$ if the first $p$ steps of the minimal graded free resolution $E_{\bullet}(L)$ of the algebra $R_L := \bigoplus_m H^0(X, L^m)$ over the polynomial ring $S_L := \Sym\ H^0(X, L)$ are linear, i.e.\ of the form
\[
\xymatrix{
S_L(-p-1)^{\oplus i_p} \ar[r] \ar@{=}[d]  & S_L(-p)^{\oplus i_{p-1}} \ar[r] \ar@{=}[d] & \ldots \ar[r] & S_L(-2)^{\oplus i_1} \ar[r] \ar@{=}[d] & S_L \ar[r] \ar@{=}[d] & R_L \ar[r] & 0 \\
E_p(L)                                                & E_{p-1}(L)                                              &                & E_1(L)                                 & E_0(L)                         &               &
}
\]
Thus $(N_0)$ means that $L$ is projectively normal (and in this case a resolution of the homogeneous ideal $I_{X/\P}$ of $X$ in $\P$ is given by $\ldots \rightarrow E_1(L) \rightarrow I_{X/\P} \rightarrow 0$); $(N_1)$ means that $I_{X/\P}$ is generated by quadrics; $(N_2)$ means that the relations among these quadrics are generated by linear ones and so on.

Writing $\K = S_L/S_{L +}$ as the quotient of the polynomial ring $S_L$ by the irrelevant maximal ideal $S_{L +} := \bigoplus_{m\geq1} \Sym^m H^0(X, L)$, it is well known that $\dim_{\K} (\Tor_i^{S_L}(R_L, \K)_{j})$ computes the cardinality of any minimal set of homogeneous generators of $E_i(L)$ of degree $j$, therefore
\[
E_i(L) = \bigoplus_j \Tor_i^{S_L}(R_L, \K)_{j} \tensor_{\K} S_L(-j) 
\]
and $L$ satisfies the property $(N_p)$ if and only if 
\begin{equation}\label{tor}
\Tor_p^{S_L}(R_L, \K)_{j} = 0 \quad  \textrm{for all}\ j \geq p+2.\footnote{$\Tor_0^{S_L}(R_L, \K)_1$ is always trivial, because we are dealing with the complete linear series $|L|$ and the corresponding embedding is linearly normal. 
Moreover, the vanishing $\Tor_p^{S_L}(R_L, \K)_{j} = 0$ for all $j \geq p+2$, forces $\Tor_i^{S_L}(R_L, \K)_{j} = 0$ for all $0 \leq i \leq p$ and $j \geq i+2$ (see the proof of Proposition 1.3.3 in \cite{lasampl} for details).}  
\end{equation}

A well established condition ensuring the property $(N_p)$ for $L$ in \emph{characteristic zero} is the vanishing
\begin{equation}\label{charzero1}
H^1(X, M_L^{\otimes(p+1)} \tensor L^h) = 0 \quad \textrm{for all}\ h \geq 1.
\end{equation}
Indeed, tensoring the Koszul resolution of $\K$ by $R_L$ and taking graded pieces, we see that the property $(N_p)$ for $L$ is equivalent to the exactness in the middle of the Koszul complex 
\begin{equation*}\label{kc}
\Lambda^{p+1}H^0(X, L) \tensor H^0(X, L^h) \rightarrow \Lambda^{p}H^0(X, L) \tensor H^0(X, L^{h+1}) \rightarrow \Lambda^{p-1}H^0(X, L) \tensor H^0(X, L^{h+2})
\end{equation*}
for all $h \geq 1$ (see \cite[pp.\ 510--511]{lasampl} for details). This can be expressed in terms of the kernel bundle of $L$. Namely, taking wedge products of the exact sequence 
\[
0 \rightarrow M_L \rightarrow H^0(X, L) \tensor \OO_X \rightarrow L \rightarrow 0,
\] 
we get
\[
0 \rightarrow \Lambda^{p+1}M_L \rightarrow \Lambda^{p+1}H^0(X, L) \tensor \OO_X \rightarrow \Lambda^{p}M_L \tensor L \rightarrow 0. 
\]
Tensoring it by $L^{h}$ and taking global section, we see that the exactness of the Koszul complex above is equivalent to the surjectivity of the map
\[
\Lambda^{p+1}H^0(X, L) \tensor H^0(X, L^h) \rightarrow H^0(X, \Lambda^{p}M_L \tensor L^{h+1}),
\]
that in turn follows from the vanishing
\begin{equation}\label{charzero2}
H^1(X, \Lambda^{p+1}M_L \tensor L^h) = 0 \quad \textrm{for all}\ h \geq 1.
\end{equation}
Now, if $\Char(\K) = 0$, $\Lambda^{p+1}M_L$ is a \emph{direct summand} of $M_L^{\otimes(p+1)}$ and in particular (\ref{charzero1}) implies (\ref{charzero2}); otherwise said $L$ satisfies the property $(N_p)$. If $\Char(\K) > 0$, the exterior power $\Lambda^{p+1}M_L$ may no longer be a direct summand of the tensor power $M_L^{\otimes(p+1)}$, hence the above discussion does not apply. Nevertheless in this section, following an approach essentially due to G. Kempf, we prove that (\ref{charzero1}) implies the property $(N_p)$ for $L$, even in \emph{positive characteristic}:
\begin{proposition}
Let $X$ be a projective variety defined over an algebraically closed field $\mathbb{K}$. Let $L$ be an ample and globally generated line bundle on $X$, and let $p$ be a non-negative integer. If
\[
H^1(X, M_L^{\otimes(p+1)} \otimes L^h) = 0 \quad \textrm{for all}\ h \geq 1,
\]
then the property $(N_p)$ holds for $L$. 
\end{proposition}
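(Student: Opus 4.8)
The plan is to reduce $(N_p)$ to a single family of surjectivity statements that the hypothesis can feed, and then to supply the characteristic-free passage from tensor powers to exterior powers that replaces the direct-summand splitting used in characteristic zero. Write $W=H^0(X,L)$. As explained before the statement, the analysis of the Koszul complex shows, in any characteristic, that $(N_p)$ is equivalent to the surjectivity of
\[
\gamma_h\colon\Lambda^{p+1}W\tensor H^0(X,L^h)\longrightarrow H^0(X,\Lambda^p M_L\tensor L^{h+1}),\qquad h\ge1,
\]
and that this surjectivity is implied by \eqref{charzero2}. So the whole task is to deduce the surjectivity of $\gamma_h$ from \eqref{charzero1}, \emph{without} routing through $H^1(\Lambda^{p+1}M_L\tensor L^h)$.

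First I would extract everything the hypothesis gives about tensor powers. Tensoring the defining sequence $0\to M_L\to W\tensor\OO_X\to L\to0$ by $M_L^{\otimes p}\tensor L^h$ produces
\[
0\to M_L^{\otimes(p+1)}\tensor L^h\to W\tensor M_L^{\otimes p}\tensor L^h\to M_L^{\otimes p}\tensor L^{h+1}\to0,
\]
so \eqref{charzero1} at once yields surjectivity of the multiplication map $\delta_h\colon W\tensor H^0(M_L^{\otimes p}\tensor L^h)\to H^0(M_L^{\otimes p}\tensor L^{h+1})$ for all $h\ge1$. The same sequence also gives, whenever $H^1(M_L^{\otimes(p+1)}\tensor L^h)=0$, an injection $W\tensor H^1(M_L^{\otimes p}\tensor L^h)\hookrightarrow H^1(M_L^{\otimes p}\tensor L^{h+1})$; since $W\neq0$ and $H^1(M_L^{\otimes p}\tensor L^N)=0$ for $N\gg0$ by Serre vanishing, a descending induction on $h$ forces $H^1(M_L^{\otimes p}\tensor L^h)=0$ for all $h\ge1$, and then a downward induction on the exponent gives $H^1(M_L^{\otimes k}\tensor L^h)=0$ for every $1\le k\le p+1$ and $h\ge1$. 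Thus all the lower tensor-power vanishings, and the associated multiplication surjectivities, are automatic from the single hypothesis.

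Next I would build the bridge between $\delta_h$ and $\gamma_h$. Using $M_L\hookrightarrow W\tensor\OO_X$ and wedging, there is a natural morphism from the tensor sequence above to the exterior sequence $0\to\Lambda^{p+1}M_L\to\Lambda^{p+1}W\tensor\OO_X\to\Lambda^p M_L\tensor L\to0$ whose right-hand vertical arrow is the canonical projection $M_L^{\otimes p}\tensor L\to\Lambda^p M_L\tensor L$ (commutativity comes down to the contraction formula for the exterior differential together with $\phi(m)=0$ for $m\in M_L=\ker\phi$). Applying $H^0(-\tensor L^h)$ gives a commutative square
\[
\begin{CD}
W\tensor H^0(M_L^{\otimes p}\tensor L^h) @>{\delta_h}>> H^0(M_L^{\otimes p}\tensor L^{h+1})\\
@VVV @VV{\rho}V\\
\Lambda^{p+1}W\tensor H^0(L^h) @>{\gamma_h}>> H^0(\Lambda^p M_L\tensor L^{h+1})
\end{CD}
\]
in which $\rho$ is induced by the canonical surjection $M_L^{\otimes p}\twoheadrightarrow\Lambda^p M_L$. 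Since $\delta_h$ is surjective, $\gamma_h$ is surjective as soon as $\rho$ is; so the entire statement is reduced to the surjectivity of $\rho$ for all $h\ge1$.

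The surjectivity of $\rho$ is the main obstacle, and is precisely where characteristic zero invokes the splitting $\Lambda^p M_L\subset M_L^{\otimes p}$. Equivalently I must show $H^1(K^{(p)}\tensor L^{h+1})=0$, where $K^{(p)}=\ker(M_L^{\otimes p}\to\Lambda^p M_L)$; this is vacuous for $p\le1$ (where $\rho$ is the identity), so the real content begins at $(N_2)$, the first non-classical case. Here I would follow Kempf and factor the projection $M_L^{\otimes p}\to\Lambda^p M_L$ through the chain of partial antisymmetrizations $M_L^{\otimes p}\to\Lambda^2 M_L\tensor M_L^{\otimes(p-2)}\to\cdots\to\Lambda^p M_L$, each step sitting in a characteristic-free short exact sequence whose kernel is a Weyl-module bundle of the form $\mathbb{S}_{(k,1)}M_L\tensor M_L^{\otimes(p-k-1)}$. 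Descending surjectivity of global sections along this chain then requires the vanishing of $H^1$ of these kernels twisted by $L^{h+1}$, which I would establish inductively from the tensor-power vanishings obtained above together with the defining sequence of $M_L$. Handling these Weyl-module pieces characteristic-freely — rather than simply splitting them off as one does in characteristic zero — is the delicate and somewhat technical heart of the argument, and is exactly what the detailed treatment in this section is meant to carry out.
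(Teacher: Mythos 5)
Your first three steps are sound and essentially match the preliminary discussion in the paper: the Koszul-theoretic equivalence of $(N_p)$ with the surjectivity of the maps $\gamma_h$ is indeed characteristic-free, the surjectivity of $\delta_h$ follows at once from the hypothesis, your descending induction (injectivity of $W\otimes H^1(M_L^{\otimes p}\otimes L^h)\to H^1(M_L^{\otimes p}\otimes L^{h+1})$ plus Serre vanishing) correctly produces the lower-power vanishings $H^1(M_L^{\otimes k}\otimes L^h)=0$ for $1\le k\le p+1$, and the commutative square reducing everything to $\rho$ is legitimate. The genuine gap is exactly where you locate ``the delicate and somewhat technical heart of the argument'': the surjectivity of $\rho\colon H^0(M_L^{\otimes p}\otimes L^{h+1})\to H^0(\Lambda^p M_L\otimes L^{h+1})$ for $p\ge 2$ is never proved, only sketched. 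This is not a technical remainder --- it \emph{is} the characteristic-$p$ obstruction the proposition exists to overcome (in characteristic zero $\rho$ is surjective for free, since the projection $M_L^{\otimes p}\to\Lambda^pM_L$ splits). Moreover the sketch, as outlined, does not obviously close up: to descend surjectivity along the chain of partial antisymmetrizations you must prove $H^1$ vanishing, after twisting by $L^{h+1}$, for the kernels of the wedge maps (hook Schur/Weyl functors such as $\mathbb{S}_{(2,1^{k-1})}M_L\otimes M_L^{\otimes(p-k-1)}$), and chasing these through exact sequences built from the defining sequence of $M_L$ requires controlling $H^2$ (and higher) of tensor powers --- information that your hypothesis, which is purely an $H^1$ statement, does not supply. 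So you have reduced the problem to a statement essentially as hard as the original one, with no actual route to it; your guess that this is ``exactly what the detailed treatment in this section is meant to carry out'' is also not what happens in the paper.

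The paper's proof avoids exterior powers altogether, which is the whole point of Kempf's method. Instead of verifying the Koszul/wedge criterion (surjectivity of $\gamma_h$), it verifies the Tor vanishing $\Tor_p^{S_L}(R_L,\K)_j=0$ for $j\ge p+2$ directly, via Kempf's Lemma: if $j>p-i+d^i(R_L)$ for all $0\le i\le p$, where $d^i(R_L)$ is the smallest $d$ such that the iterated kernel module $T^i(R_L)$ is generated in degrees $\le d$, then $\Tor_p^{S_L}(R_L,\K)_j=0$. The modules $T^i(R_L)$ are identified with $\bigoplus_{m\ge i} K(L^{m-i},L,\dots,L)$, and $K(L^h,L,\dots,L)=H^0(M_L^{\otimes i}\otimes L^h)$; thus the surjectivity of your maps $\delta_h$ (for every exponent $i\le p$, which your descending induction correctly provides from the single hypothesis) yields $d^i(R_L)=i+1$ for all $i\le p$, and the Tor vanishing follows. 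In other words, Kempf's criterion is expressed purely in terms of \emph{tensor} powers of $M_L$, so no analogue of your map $\rho$ ever appears; replacing the Koszul criterion by this Tor-theoretic one is the missing idea your argument needs.
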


Let us start by recalling two definitions and an algebraic lemma of Kempf (\cite{ke}, see also \cite[\S2]{ru}):
\begin{definition}\label{def1}
For any $L_i$'s (not necessarily ample) line bundles on $X$, let $K(L_1) = H^0(X, L_1)$ and, for $n > 1$, define $K(L_1, \ldots , L_n)$ inductively by the exact sequence:
\[
0 \rightarrow K(L_1, \ldots , L_n) \rightarrow K(L_1, L_3, \ldots , L_n) \tensor K(L_2) \rightarrow K(L_1 \tensor L_2, L_3, \ldots , L_n).
\]
\end{definition}
In particular, $K(L_1, L_2)$ is the kernel of the multiplication map of global sections $H^0(X, L_1) \tensor H^0(X, L_2) \rightarrow H^0(X, L_1 \tensor L_2)$.  
\begin{definition}
Let $S$ be a polynomial ring over $\K$ and let $R$ be a finitely generated graded $S$-module. \\
(1) Define $T^0(R) := R$, $T^1(R) := \Ker[R(-1) \tensor_{\K} S_1 \rightarrow R]$ and inductively
\begin{align*}
T^i(R) := T^{i-1}(T^1(R)).
\end{align*}
(2) Define 
\[
d^i(R) := \min \{d \in \Z \ |\ T^i(R)\ \textrm{is generated over $S$ by elements of degree} \leq d \}.  
\]
\end{definition}  
\begin{lemma}[Kempf \cite{ke}, Lemma 16]\label{lemmak}
Let $S = \K[x_0, \ldots , x_r]$ be a polynomial ring, graded in the standard way, over $\K = S/(x_0, \ldots , x_r)$. Let $R$ be a finitely generated graded $S$-module. If $j > p -i + d^i(R)$ for all $0 \leq i \leq p$, then
\[
\Tor_p^{S}(R, \K)_{j} = 0.
\]
\end{lemma}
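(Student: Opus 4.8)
The plan is to argue by induction on $p$, the central device being the four-term exact sequence of graded $S$-modules
\[
0 \to T^1(R) \to S_1 \otimes_{\K} R(-1) \xrightarrow{\ d\ } R \to \Tor_0^S(R,\K) \to 0,
\]
where $S_1=\bigoplus_i \K x_i$ is the degree-one piece and $d(\sum_i x_i\otimes m_i)=\sum_i x_i m_i$. Writing $S_+=(x_0,\dots,x_r)$ and $R':=\mathrm{im}(d)=S_+R$, I would split this into $0\to T^1(R)\to S_1\otimes R(-1)\to R'\to 0$ and $0\to R'\to R\to \Tor_0^S(R,\K)\to 0$. The base case $p=0$ is immediate: the sole hypothesis $j>d^0(R)$ says that $R=T^0(R)$ has no minimal generator in degree $j$, i.e.\ $\Tor_0^S(R,\K)_j=0$.

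For the inductive step I would first record the identity $d^i(R)=d^{i-1}(T^1(R))$, which follows at once from $T^i(R)=T^{i-1}(T^1(R))$. Under the substitution $i=i'+1$ this makes the hypotheses for $R$ in the range $1\le i\le p$ coincide exactly with the hypotheses of the lemma at level $p-1$ for the module $T^1(R)$; hence the inductive hypothesis gives $\Tor_{p-1}^S(T^1(R),\K)_j=0$. Feeding this into the long exact sequence of $0\to T^1(R)\to S_1\otimes R(-1)\to R'\to 0$ shows that $\Tor_p^S(S_1\otimes R(-1),\K)_j\to \Tor_p^S(R',\K)_j$ is surjective. Separately, the remaining hypothesis (the case $i=0$), namely $j>p+d^0(R)$, forces $\Tor_p^S(\Tor_0^S(R,\K),\K)_j=\Tor_0^S(R,\K)_{j-p}\otimes \Lambda^p S_1=0$, since the trivial module $\Tor_0^S(R,\K)$ has nothing in degree $j-p$; the long exact sequence of $0\to R'\to R\to \Tor_0^S(R,\K)\to 0$ then makes $\Tor_p^S(R',\K)_j\to \Tor_p^S(R,\K)_j$ surjective as well. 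Composing, the map $\Tor_p^S(d,\K)\colon \Tor_p^S(S_1\otimes R(-1),\K)_j\to \Tor_p^S(R,\K)_j$ is surjective.

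The hard part is that a naive chase stops here and is circular: $\Tor_p^S(S_1\otimes R(-1),\K)_j\cong S_1\otimes \Tor_p^S(R,\K)_{j-1}$, so surjectivity alone only relates degree $j$ to degree $j-1$ and proves nothing. This is precisely where minimality of the resolution has to enter, and the resolution I would use is the observation that $\Tor_p^S(d,\K)$ is in fact the \emph{zero} map. Indeed $d$ is a sum of the multiplications $R(-1)\xrightarrow{\,x_i\,}R$ with $x_i\in S_+$, and multiplication by any element of $S_+$ induces the zero map on $\Tor^S_\bullet(-,\K)$: one lifts it to multiplication by $x_i$ on a free resolution and applies $-\otimes_S\K$, under which $x_i$ becomes $0\in\K$. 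A map that is simultaneously surjective and zero has vanishing target, so $\Tor_p^S(R,\K)_j=0$, closing the induction. Finally I would note that finite generation of $R$ guarantees $d^0(R)<\infty$ and that $T^1(R)\subseteq S_1\otimes R(-1)$ is again finitely generated, so the induction remains within the stated framework.
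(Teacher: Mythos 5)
Your proof is correct and follows essentially the same route as the paper's: the same four-term sequence through the image $R'$ of the multiplication map $S_1\otimes_{\K}R(-1)\to R$, the same vanishing of $\Tor_p^S(Q,\K)_j$ for the cokernel $Q$ in degrees $j>p+d^0(R)$, and the same key observation that multiplication by elements of $S_1$ induces the zero map on $\Tor_\bullet^S(-,\K)$, with the passage to $T^1(R)$ organized as a formal induction on $p$ instead of the paper's ``repeat the procedure $p$ times.'' The only cosmetic differences are that you compute $\Tor_p^S(Q,\K)_j$ directly from the Koszul resolution (where the paper invokes Castelnuovo--Mumford regularity of the finite-length module $Q$) and that you read the long exact sequence as ``surjective plus zero implies vanishing target'' where the paper reads it as ``the connecting map is injective into a vanishing module''---these are the same argument run in opposite directions.
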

\noindent Due to some obscurities in Kempf's argument and for the sake of self-containedness, we prefer to give a proof of the above Lemma, which closely follows that of Kempf.
\begin{proof}[Proof of Lemma \ref{lemmak}]
Consider the exact sequence 
\[
0 \rightarrow T^1(R) \rightarrow R(-1) \tensor_{\K} S_1 \stackrel{\alpha}{\rightarrow} R.
\]
The image $R'$ of $\alpha$ is a graded submodule of $R$. The quotient module $Q = R/R'$ is of finite length, hence its Castelnuovo-Mumford regularity $\reg(Q) = \max \{ d \ |\ Q_d \neq 0 \}$ (see \cite{eis}, Corollary 4.4). Moreover $Q$ is zero in degrees $> d^0(R)$, therefore
\begin{equation}\label{keq1}
\Tor_p^S(Q, \K)\ \textrm{is zero in degrees} > p + d^0(R).
\end{equation}
Indeed, if $\Tor_p^S(Q, \K)_j \neq 0$ for a $j > p + d^0(R)$, then $\reg(Q) \leq d^0(R) < j - p$. But, by definition, $\reg(Q) = \Sup \{k-i \ |\ \dim_{\K}( \Tor_i^S(Q, \K)_k) \neq 0 \}$ and so we get a contradiction. Now (\ref{keq1}) implies that the map
\begin{equation*}\label{keq2}
\Tor_p^S(R', \K) \rightarrow \Tor_p^S(R, \K)
\end{equation*}
is surjective in degrees $> p + d^0(R)$. Therefore, in order to prove the statement, it is enough to prove that $\Tor_p^S(R', \K)_j = 0$, if $j > p + d^0(R)$. From the long exact sequence associated to
\[
0 \rightarrow T^1(R) \rightarrow R(-1) \tensor_{\K} S_1 \stackrel{\alpha}{\rightarrow} R' \rightarrow 0,
\]
we get
\[
\Tor_p^S(R(-1) \tensor_{\K} S_1, \K) \stackrel{\alpha_*}{\rightarrow} \Tor_p^S(R', \K) \stackrel{\delta}{\rightarrow} \Tor_{p-1}^S(T^1(R), \K).
\]
Note that $\alpha_*$ is the multiplication by $S_1$ in the first variable. Since $\alpha_*$ is also the multiplication by $S_1$ in the second variable, it is the zero map. Therefore $\delta$ gives an inclusion
\[
\Tor_p^S(R', \K) \subseteq \Tor_{p-1}^S(T^1(R), \K)
\] 
and we may repeat this procedure $p$ times, obtaining 
\[
\Tor_{-1}^S(T^{p+1}(R), \K) = 0.
\]
\end{proof}

If now $L$ is an ample line bundle on $X$, $S = S_L$ and $R = R_L$, the link between the previous definitions is given by  
\begin{equation}\label{link}
T^i(R_L) = \bigoplus_{m \geq i} K(L^{m-i}, \underbrace{L, \ldots , L}_{i}).
\end{equation}
\begin{proof} If $i = 0$, then $T^0(R_L) = R_L$ and $K(L^m) = H^0(X, L^m)$. So (\ref{link}) is true. By definition
\[
T^i(R_L) = T^{i-1}(T^1(R_L)) = T^{i-1}(\Ker[R_L(-1) \tensor_{\K} H^0(X, L) \rightarrow R_L]),
\]  
and 
\[
0 \rightarrow \bigoplus_{m \geq i} K(L^{m-i}, \underbrace{L, \ldots , L}_{i}) \rightarrow \bigoplus_{m \geq i} K(L^{m-i}, \underbrace{L, \ldots , L}_{i-1}) \tensor H^0(X, L) \rightarrow \bigoplus_{m \geq i} K(L^{m-i+1}, \underbrace{L, \ldots , L}_{i-1}). 
\]
Therefore (\ref{link}) holds, by induction on $i$.
\end{proof}
\noindent The next Lemma allows to reduce the property $(N_p)$ for $L$ to the vanishing (\ref{charzero1}), in a way that avoids the exterior power of $M_L$.
\begin{lemma}\label{lemmalink}
(1) For all $n \geq 0$ and $h \geq 1$, one has $H^0(X, M_L^{\otimes n} \tensor L^h) = K(L^h, \underbrace{L, \ldots , L}_{n})$, if $L$ is basepoint-free. \\
(2) Let $i \geq 0$ and $h \geq 1$. If $L$ is basepoint-free and $H^1(X, M_L^{\otimes(i+1)} \tensor L^h) = 0$, then the multiplication map
\[
K(L^h, \underbrace{L, \ldots , L}_{i}) \tensor H^0(X, L) \rightarrow K(L^{h+1}, \underbrace{L, \ldots , L}_{i})
\]
is surjective. \\
(3)\emph{(Rubei \cite{ru}, p.\ 2578).} 
If the multiplication maps
\[
K(L^h, \underbrace{L, \ldots , L}_{i}) \tensor H^0(X, L) \rightarrow K(L^{h+1}, \underbrace{L, \ldots , L}_{i})
\]
are surjective for all $h \geq 1$, then $d^i(R_L) = i+1$.
\end{lemma}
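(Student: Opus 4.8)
The plan is to treat the three items in order, with item (1) carrying the real content and items (2)--(3) following from it by standard cohomological and homological bookkeeping. For (1) I would argue by induction on $n$, the case $n=0$ being the definition $K(L^h)=H^0(X,L^h)$. For the inductive step, since $L$ is basepoint-free the kernel bundle $M_L$ is locally free, so tensoring the defining sequence $0\to M_L\to H^0(X,L)\otimes\OO_X\to L\to 0$ by the locally free sheaf $M_L^{\otimes n}\otimes L^h$ preserves exactness and yields
\[
0\to M_L^{\otimes(n+1)}\otimes L^h\to H^0(X,L)\otimes\bigl(M_L^{\otimes n}\otimes L^h\bigr)\to M_L^{\otimes n}\otimes L^{h+1}\to 0.
\]
Passing to global sections (left exact) and inserting the inductive identifications $H^0(M_L^{\otimes n}\otimes L^h)=K(L^h,\underbrace{L,\ldots,L}_{n})$ and $H^0(M_L^{\otimes n}\otimes L^{h+1})=K(L^{h+1},\underbrace{L,\ldots,L}_{n})$ (both valid since $h,h+1\ge 1$) produces exactly the left-exact sequence that \emph{defines} $K(L^h,\underbrace{L,\ldots,L}_{n+1})$ in Definition \ref{def1}, taking $L_1=L^h$ and $L_2=\dots=L$. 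Identifying the two kernels then gives the claim.

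For (2) I would reuse the very same short exact sequence with $i$ in place of $n$ and pass to the long exact cohomology sequence, whose relevant stretch is
\[
H^0(M_L^{\otimes i}\otimes L^h)\otimes H^0(X,L)\xrightarrow{\ \mu\ } H^0(M_L^{\otimes i}\otimes L^{h+1})\to H^1(M_L^{\otimes(i+1)}\otimes L^h).
\]
Thus $\operatorname{coker}(\mu)$ injects into $H^1(M_L^{\otimes(i+1)}\otimes L^h)$, so the hypothesis that this $H^1$ vanishes forces $\mu$ to be surjective; by item (1), $\mu$ is precisely the multiplication map $K(L^h,\underbrace{L,\ldots,L}_{i})\otimes H^0(X,L)\to K(L^{h+1},\underbrace{L,\ldots,L}_{i})$, which is the assertion.

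For (3) I would read off $d^i(R_L)$ from the description (\ref{link}) of $T^i(R_L)$ as the graded $S_L$-module whose degree-$m$ component is $K(L^{m-i},\underbrace{L,\ldots,L}_{i})$ for $m\ge i$, on which $S_1=H^0(X,L)$ acts through the multiplication maps of the statement. Since $S_L$ is standard graded, $T^i(R_L)$ is generated in degrees $\le i+1$ if and only if $S_1\otimes T^i(R_L)_{m-1}\to T^i(R_L)_m$ is surjective for every $m\ge i+2$; for such $m$ this is exactly the hypothesis with $h=m-1-i\ge 1$, giving $d^i(R_L)\le i+1$. For the reverse inequality I would observe that $T^i(R_L)$ vanishes in degrees $\le i$: its degree-$i$ piece is $K(\OO_X,\underbrace{L,\ldots,L}_{i})$, which a short induction on $i$ (based on $K(\OO_X,L)=\Ker[H^0(X,L)\xrightarrow{\sim}H^0(X,L)]=0$ via the defining sequence) shows to vanish for $i\ge 1$. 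A nonzero graded module concentrated in degrees $\ge i+1$ cannot be generated from strictly lower degrees, so $d^i(R_L)\ge i+1$, whence equality. (Only the upper bound $d^i(R_L)\le i+1$ is used in Lemma \ref{lemmak}, where it makes the bound $p-i+d^i(R_L)$ collapse to $p+1$, matching the $(N_p)$ condition.)

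The main obstacle I anticipate is not any single computation but the careful bookkeeping in item (1): one must verify that the global-sections map induced by $\mathrm{id}\otimes\mathrm{ev}$ coincides, under the inductive identification, with the multiplication map appearing in Definition \ref{def1}, and that this compatibility is natural enough that the identifications intertwine the $S_L$-module structure on $\bigoplus_m H^0(M_L^{\otimes i}\otimes L^{m-i})$ with that on $T^i(R_L)$. This naturality is what makes items (2) and (3) literally translate cohomological statements about $M_L$ into the multiplication-map language of the $K(\cdots)$, so it should be stated explicitly rather than left implicit in the induction.
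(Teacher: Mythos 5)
Your proposal is correct and follows essentially the same path as the paper: the same induction on $n$ via the twisted evaluation sequence for (1), the same long exact cohomology sequence for (2), and the same reading of the identification (\ref{link}) as giving generation of $T^i(R_L)$ in degree $i+1$ for (3). The only divergence is your additional lower-bound argument for $d^i(R_L)\ge i+1$ in (3), which (as you yourself note) is never used downstream and which --- like the paper's bare assertion of equality --- is delicate at $i=0$, where $T^0(R_L)=R_L$ has the nonzero piece $\K$ in degree $0$; what both proofs actually establish and need is only the upper bound $d^i(R_L)\le i+1$.
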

\begin{proof}
\emph{(1)}\,: If $n = 0$, then by definition $H^0(X, L^h) = K(L^h)$ for all $h \geq 1$. Suppose $n \geq 1$. The kernel bundle $M_L$ sits in the short exact sequence
\begin{equation}\label{mladd}
0 \rightarrow M_L \rightarrow H^0(X, L) \tensor \OO_X \rightarrow L \rightarrow 0.
\end{equation}
Tensoring it by $M_L^{\otimes(n-1)}\tensor L^h$, one obtains
\begin{equation}\label{ml1}
0 \rightarrow M_L^{\otimes n} \tensor L^h \rightarrow H^0(X, L) \tensor M_L^{\otimes(n-1)}\tensor L^h \rightarrow M_L^{\otimes(n-1)}\tensor L^{h+1} \rightarrow 0.
\end{equation}
Taking global sections of (\ref{ml1}) and using the inductive hypothesis, we obtain
\[
0 \rightarrow H^0(X, M_L^{\otimes n} \tensor L^h) \rightarrow H^0(X, L) \tensor K(L^h, \underbrace{L, \ldots , L}_{n-1}) \rightarrow K(L^{h+1}, \underbrace{L, \ldots , L}_{n-1}).  
\]
Therefore, by definition, $H^0(X, M_L^{\otimes n} \tensor L^h) = K(L^h, \underbrace{L, \ldots , L}_{n})$. \\
\emph{(2)}\,: 
Tensoring (\ref{mladd}) by $M_L^{\otimes i} \tensor L^h$, we have
\begin{equation}\label{ml2}
0 \rightarrow M_L^{\otimes(i+1)} \tensor L^h \rightarrow H^0(X, L) \tensor M_L^{\otimes i}\tensor L^h \rightarrow M_L^{\otimes i}\tensor L^{h+1} \rightarrow 0.
\end{equation}
From the long exact sequence in cohomology associated to (\ref{ml2}), and thanks to the point \emph{(1)}, one has
\[
H^0(X, L) \tensor K(L^h, \underbrace{L, \ldots , L}_{i}) \stackrel{\alpha}{\rightarrow} K(L^{h+1}, \underbrace{L, \ldots , L}_{i}) \rightarrow H^1(X, M_L^{\otimes(i+1)} \tensor L^h) = 0.
\]
Therefore the multiplication map $\alpha$ is surjective. \\
\emph{(3)}\,: By (\ref{link}) and the hypothesis we have that $T^i(R_L)$ is generated over $S_L$ by 
\[
K(\underbrace{L, \ldots , L}_{i+1}).
\]
This means that it is generated by the piece of degree $m$ with $m-i = 1$, i.e.\ $m=i+1$. Therefore $d^i(R_L) = i+1$.
\end{proof}

\section{Proof of the Theorems \ref{main1} and \ref{syzmult}}

\begin{proof}[Proof of Theorem \ref{main1}]
Let $L$ be a representative of the class $\l$. For all $0 \leq i \leq p$, we have 
\[
\eps_1(\l) < \frac{1}{p+2} \leq \frac{1}{i+2}.
\]
Therefore $L$ is basepoint-free and, thanks to the Proposition \ref{mainprop}, we know that $M_L^{\otimes(i+1)} \tensor L^h$ is $IT(0)$, for all $h \geq 1$. This implies, in particular, that $H^1(A, M_L^{\otimes(i+1)} \tensor L^h) = 0$ for all $h \geq 1$. Hence, by Lemma \ref{lemmalink} \emph{(2)} and \emph{(3)}, we obtain 
\[
d^i(R_L) = i+1.
\]
Now, if $j > p -i + d^i(R_L) = p + 1$, Kempf's Lemma \ref{lemmak} implies that
\[
\Tor_p^{S_L}(R_L, \K)_{j} = 0.
\]
As explained in (\ref{tor}), this is equivalent to the property $(N_p)$ for $L$.
\end{proof}

\begin{proof}[Proof of Theorem \ref{syzmult}]
Note that we have already proved the $t=0$ case -- even without the basepoint-freeness assumption -- and the $t=1$ case (Corollary \ref{pathm}). Hence we may assume $t > 1$. By Theorem \ref{main1}, it suffices to show that $\eps_1(m\l) < \frac{1}{p+2}$. We have
\[
\eps_1(m\l) = \frac{\eps_1(\l)}{m} \leq \frac{\eps_1(\l)}{p+3-t} \leq \frac{1}{t(p+3-t)},
\] 
where the last inequality follows by definition. Let us impose now the inequality
\[
\frac{1}{t(p+3-t)} < \frac{1}{p+2},
\]
or equivalently
\[
t^2 - (p+3)t + p+2 < 0.
\]
This is satisfied if and only if $1 < t <  p+2$ and, by hypothesis, we have $1 < t \leq  p+1$.
\end{proof}

\providecommand{\bysame}{\leavevmode\hbox
to3em{\hrulefill}\thinspace}

\end{document}